\title{Open subgroups of free topological groups}
\author{Jeremy Brazas}
\DeclareMathAlphabet{\mathpzc}{OT1}{pzc}{m}{it}
\newcommand{\pyx}{p:Y\to X}
\newcommand{\ui}{[0,1]}
\newcommand{\piqtop}{\pi^{qtop}}
\newcommand{\fmx}{F_{M}(X)}
\newcommand{\fgx}{F_{G}(X,\ast)}
\newcommand{\fmy}{F_{M}(Y)}
\newcommand{\gamz}{\Gamma_{0}}
\newcommand{\gam}{\Gamma}
\newcommand{\gamast}{\Gamma^{\pm}}
\newcommand{\cgam}{\mathscr{C}(\gam)}
\newcommand{\fgam}{\mathscr{F}(\gam)}
\newcommand{\fgamv}{\mathscr{F}(\gam)(v)}
\newcommand{\topgraph}{\mathbf{TopGraph}}
\newcommand{\qtop}{\mathbf{qTop}}
\newcommand{\spaces}{\mathbf{Top}}
\newcommand{\bspaces}{\mathbf{Top_{\ast}}}
\newcommand{\pit}{\pi^{\tau}}
\newcommand{\topcat}{\mathbf{TopCat}}
\newcommand{\topgrp}{\mathbf{TopGrp}}
\newcommand{\tgam}{\widetilde{\gam}}
\newcommand{\stopol}{\mathbf{sTop}}
\newcommand{\mcc}{\mathcal{C}}
\newcommand{\mcd}{\mathscr{D}}
\newcommand{\mcg}{\mathcal{G}}
\newcommand{\scrc}{\mathscr{C}}
\newcommand{\mcp}{\mathcal{P}}
\newcommand{\mcu}{\mathcal{U}}
\newcommand{\mcv}{\mathcal{V}}
\newcommand{\scrp}{\mathscr{P}}
\newcommand{\mpgam}{\scrp_{\gam}}
\newcommand{\epgam}{\mathscr{E}_{\gam}}
\newtheorem{theorem}{Theorem}
\newtheorem{lemma}[theorem]{Lemma}
\newtheorem{corollary}[theorem]{Corollary}
\newtheorem{definition}[theorem]{Definition}
\newtheorem{example}[theorem]{Example}
\newtheorem{remark}[theorem]{Remark}
\newtheorem{freetopgroupoids}[theorem]{Free $\spaces$-groupoids}
\newtheorem{pathcomponentspaces}[theorem]{Path component spaces}
\newtheorem{freetopcat}[theorem]{Free $\spaces$-categories}
\begin{document}
\maketitle
\begin{abstract}
The theory of covering spaces is often used to prove the Nielsen-Schreier theorem, which states that every subgroup of a free group is free. We apply the more general theory of semicovering spaces to obtain analogous subgroup theorems for topological groups: Every open subgroup of a free Graev topological group is a free Graev topological group. An open subgroup of a free Markov topological group is a free Markov topological group if and only if it is disconnected.
\end{abstract}
\section{Introduction}
A well-known application of covering space theory is the Nielsen-Schreier theorem \cite{Schreier}, which states that every subgroup of a free group is free \cite{Brown06,Hatcher02}. The corresponding situation for topological groups is more complicated since it is not true that every closed subgroup of a free topological group is free topological \cite{Brown75,Clarke,Graev,HuntMorris}. The purpose of this paper is to use the theory of semicovering spaces developed in \cite{Brazsemi} to prove the following theorem.
\begin{theorem}\label{nielsonshreier}
Every open subgroup of a free Graev topological group is a free Graev topological group.
\end{theorem}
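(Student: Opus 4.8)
The plan is to adapt the groupoid-theoretic proof of the classical Nielsen-Schreier theorem to the topological setting, replacing ordinary coverings by the semicoverings of \cite{Brazsemi}. First I would realize the free Graev topological group $\fgx$ as an intrinsic invariant of a suitable ``topological bouquet of circles.'' Concretely, I expect $\fgx$ to arise either as the quasitopological fundamental group $\piqtop_1$ of a suspension-type space built from $(X,\ast)$ (in the spirit of $\sus$), or, more usefully for the argument, as the vertex group $\fgamv$ of the free topological groupoid $\fgam$ on the topological graph $\gam$ having a single vertex and edge space $(X,\ast)$. The advantage of the groupoid picture is that it is manifestly functorial and interacts cleanly with covering morphisms.

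Next, given an open subgroup $H\leq\fgx$, I would invoke the classification of semicoverings from \cite{Brazsemi}: open subgroups of the fundamental group correspond to connected semicoverings of the base, equivalently to connected covering morphisms $q\colon\mathscr{H}\to\fgam$ of topological groupoids whose induced map on the vertex group at the chosen basepoint has image exactly $H$. The openness hypothesis is essential here---it is precisely what guarantees that the relevant fibers behave discretely enough for a genuine semicovering (rather than some weaker fibration) to exist, and it is the topological replacement for the automatic discreteness one enjoys in the classical discrete setting.

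The heart of the proof, and the step I expect to be the main obstacle, is the topological analogue of the statement ``a covering of a graph is a graph.'' I would show that every connected covering morphism of the free topological groupoid $\fgam$ is itself isomorphic to the free topological groupoid on a topological graph $\gam'$---the ``covering graph''---assembled from the fibers over the vertex together with the lifts of the edges of $\gam$. The delicate points are entirely point-set-topological: one must equip the vertex and edge spaces of $\gam'$ with topologies making all the groupoid structure maps continuous, verify that $q$ satisfies the local-homeomorphism conditions defining a semicovering, and, most crucially, check that the subgroup topology that $H$ inherits from $\fgx$ coincides with the free Graev topology on the corresponding vertex group $\mathscr{F}(\gam')(w)$. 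Once this identification of topologies is secured, the vertex group of the covering groupoid is by construction a free Graev topological group, and hence $H$, being topologically isomorphic to it, is free Graev---completing the argument.
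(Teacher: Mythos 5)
Your overall route is the paper's route---realize the Graev group as the topological fundamental group (vertex group of a free $\spaces$-groupoid) of a topological graph, classify the open subgroup by a semicovering, prove that a semicovering of a graph is a graph, and read off the vertex group---but your very first step contains a genuine error. The single-vertex graph with edge space $X$ does \emph{not} realize $\fgx$: the vertex group of the free $\spaces$-groupoid on a one-vertex graph is the free \emph{Markov} group $\fmx\cong F_G(X_+,\ast)$, in which the basepoint generator is a nontrivial (isolated-basepoint) generator. There is no way to designate a distinguished edge as an identity in a free construction, and you cannot repair this by deleting the basepoint edge either, since $\fgx$ is algebraically free on $X\backslash\{\ast\}$ but in general \emph{not} topologically isomorphic to $F_M(X\backslash\{\ast\})$; nor is the reduced suspension of $X$ the realization of a $\spaces$-graph when $\ast$ is not isolated. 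The paper resolves this with a two-vertex graph $\gam$ (so $|\gam|$ is an unreduced suspension, close to your hedged ``suspension-type'' alternative), takes the maximal tree $T$ to be the single edge $\{\ast\}$, and proves (Theorem \ref{vertexgroup}) that the vertex group of $\fgam$ carries exactly the Graev topology of $F_G(\gam/T,\ast)$---the tree is collapsed at the groupoid level via a retraction $r_T$, not at the space level.

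A second gap: the fundamental $\spaces$-groupoid of a $\spaces$-graph $\gam$ is free on $\pi_0(\gam)$, not on $\gam$ (Theorem \ref{computation}), so building the graph with edge space $X$ itself computes the free group on the path component space $\pi_0(X)$ rather than on $X$. To realize an \emph{arbitrary} $X$ you need Harris's theorem that every space is a path component space (Remark \ref{everygraphisapcgraph}): take edge space $h(X)$ with $\pi_0(h(X))\cong X$. Two further points you gloss over but correctly locate: applying the semicovering classification to $|\gam|$ requires knowing $|\gam|$ is locally wep-connected, since it is typically not locally path connected (Lemma \ref{lwepconnected}); and the target of the classification must be the topological group $\pit_1$, not the quasitopological $\piqtop_1$, since $\fgx$ is a genuine topological group. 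Your identification of the heart of the argument---openness of $H$ yielding a semicovering with image exactly $H$ as a topological embedding (the paper's Lemma \ref{embedding}), and the ``semicovering of a graph is a graph'' theorem with its delicate point-set verifications (the paper's Theorem \ref{semicoveringofgraph})---is accurate, so with the two corrections above your outline becomes the paper's proof.
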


Free topological groups are important objects in the general theory of topological groups and have an extensive literature dating back to their introduction by A.A. Markov \cite{Markov} in the 1940s. Markov \cite{Markov} defined the free topological group $\fmx$ on a space $X$ and Graev \cite{Graev} later introduced the free topological group $\fgx$ on a space $X$ with basepoint $\ast\in X$. While the existence of these groups (for any $X$) follows abstractly from adjoint existence theorems \cite{Po91}, the theory of free topological groups has traditionally required the condition that $X$ be completely regular since the canonical injections $\sigma:X\to \fmx$ and $\sigma_{\ast}:X\to \fgx$ are embeddings if and only if $X$ is completely regular. In this paper, we exploit universal properties and adjoint functors (as opposed to working with complicated characterizations of the topology) to avoid placing any restrictions on $X$. For more on the theory of free topological groups, we refer the reader to \cite{AT,Sipacheva,Thomas}.

Topological versions of the Nielsen-Schreier Theorem \cite{BHsubgroup,Nickolas75} have been attempted for free topological groups on Hausdorff $k_{\omega}$-spaces, i.e. spaces which are the inductive limit of a sequence of compact subspaces. In this case, a subgroup of a free Graev topological group which admits a continuous Schreier traversal is free (Graev) topological. Theorem \ref{nielsonshreier} is an improvement in the sense that it holds for the free topological group on an arbitrary topological space. Another difference between our approach and that in \cite{BHsubgroup} is that we use topologically enriched graphs and categories as opposed to graphs and categories internal to a category of spaces.

Covering theoretic proofs of the algebraic Nielsen-Schreier Theorem typically require an understanding of covering spaces and fundamental group(oid)s of graphs. Our proof of Theorem \ref{nielsonshreier} generalizes this approach by replacing covering theory with the theory of semicoverings \cite{Brazsemi}, graphs with $\spaces$-graphs (i.e. topological graphs with discrete vertex spaces), and the fundamental groupoid (fundamental group) with the fundamental $\spaces$-groupoid \cite{Brazsemi} (topological fundamental group \cite{Braztopgrp}). Our application of the classification of semicoverings relies heavily on the fact that the theory applies to certain non-locally path connected spaces, called locally wep-connected spaces, which are not included in classical covering space theory.

This paper is structured as follows. In Section \ref{sectionftg}, we recall the basic theory of free topological groups and include a general comparison of the two notions of free topological groups (those in the sense of Graev and those in the sense of Markov). Using Theorem \ref{nielsonshreier}, we obtain a structure theorem for open subgroups of free Markov topological groups (Theorem \ref{mainresult2}): \textit{An open subgroup of a free Markov topological group is a free Markov topological group if and only if it is disconnected}.

In Section \ref{sectiongraphsandcats}, we extend the usual notion of an algebraic graph by allowing edge spaces to have non-discrete topologies; the resulting objects are called $\spaces$-graphs. We then present some universal constructions of topologically enriched categories and groupoids to be used in the computations of Section 4. In Section \ref{sectionfundgroupoid}, we show the fundamental $\spaces$-groupoid (resp. topological fundamental group) of a $\spaces$-graph is a free $\spaces$-groupoid (resp. free Graev topological group). Finally, in Section \ref{sectionsemicovering}, semicovering theory is applied to $\spaces$-graphs. Analogous to the fact that a covering of a graph is a graph, we find that a semicovering of a $\spaces$-graph is a $\spaces$-graph. The paper concludes with a proof of Theorem \ref{nielsonshreier}.
\section{Free topological groups}\label{sectionftg}
\begin{definition}\emph{
Let $X$ be a topological space. The \textit{free Markov topological group on} $X$ is the unique (up to isomorphism) topological group $\fmx$ equipped with a map $\sigma:X\to \fmx$ universal in the sense that every map $f:X\to G$ to a topological group $G$ induces a unique, continuous homomorphism $\hat{f}:\fmx\to G$ such that $\hat{f}\sigma=f$.}
\end{definition}

The existence of free Markov topological groups is guaranteed by the General Adjoint Theorem \cite{Po91}. In particular, if $\spaces$ is the category of topological spaces and $\topgrp$ is the category of topological groups, then $F_M:\spaces\to \topgrp$ is left adjoint to the forgetful functor $\topgrp\to \spaces$. Algebraically, $\fmx$ is the free group on the underlying set of $X$ and $\sigma:X\to \fmx$ is the canonical injection of generators.
\begin{definition} \emph{
Let $X$ be a space with basepoint $\ast\in X$. The \textit{free Graev topological group} on $(X,\ast)$ is the unique (up to isomorphism) topological group $F_G(X,\ast)$ equipped with a map $\sigma_{\ast}:X\to \fgx$ such that $\sigma_{\ast}(\ast)$ is the identity element of $\fgx$ and universal in the sense that every map $f:X\to G$ to a topological group $G$ which takes $\ast$ to the identity element of $G$ induces a unique, continuous homomorphism $\tilde{f}:\fgx\to G$ such that $\tilde{f}\sigma_{\ast}=f$.}
\end{definition}
Similar to the unbased case, if $\bspaces$ is the category of based topological spaces, then $F_G:\bspaces\to \topgrp$ is left adjoint to the forgetful functor $\topgrp\to\bspaces$. Here, the basepoint of a topological group is the identity element. Algebraically, $\fgx$ is the free group on the set $X\backslash \{\ast\}$, however, it is not necessarily isomorphic to $F_M(X\backslash \{\ast\})$ as a topological group. On the other hand, $\fmx$ is isomorphic to the free Graev topological group $F_G(X_+,\ast)$ where $X_+=X\sqcup\{\ast\}$ has an isolated basepoint and $\fgx$ is isomorphic to the quotient topological group $\fmx/N$ where $N$ is the conjugate closure of $\{\ast\}$.

Graev showed in \cite[Theorem 2]{Graev} that the isomorphism class of $\fgx$ as a topological group does not depend on the choice of basepoint, i.e. given any other point $\ast '\in X$ there is an isomorphism $F_{G}(X,\ast)\to F_{G}(X,\ast ')$ of topological groups.\footnote{Graev assumes $X$ is completely regular, however, the argument given also applies to the general case.} It remains to understand when $\fgx$ is isomorphic to the free Markov topological group $\fmy$ on some space $Y$. This is answered by Graev in \cite{Graev} in the case that $X$ is completely regular; we modify Graev's argument only slightly in order to obtain a more general result.
\begin{lemma}\label{connected}
If $X$ is the disjoint union $X=A_1\sqcup A_2$ of open sets $A_i\subset X$ and $e_i\in A_i$, then $F_G(X,e_1)$ is isomorphic to the free Markov topological group $F_M(A_1\vee A_2)$ on the wedge sum $A_1\vee A_2=X/\{e_1,e_2\}$.
\end{lemma}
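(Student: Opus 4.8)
The plan is to construct continuous homomorphisms in both directions directly from the universal properties of $F_G$ and $F_M$, and then to check that they are mutually inverse by evaluating on generators. Write $q\colon X\to W:=A_1\vee A_2$ for the quotient map collapsing $\{e_1,e_2\}$ to the wedge point $\ast$, and let $\sigma_{\ast}\colon X\to F_G(X,e_1)$ and $\sigma_W\colon W\to F_M(W)$ denote the canonical universal maps. Note that both $A_1$ and $A_2$ are clopen, since each is open and is the complement of the other.

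First I would produce a continuous homomorphism $\Phi\colon F_G(X,e_1)\to F_M(W)$. Define $f\colon X\to F_M(W)$ by $f(x)=\sigma_W(\ast)^{-1}\sigma_W(q(x))$ for $x\in A_1$ and $f(x)=\sigma_W(q(x))$ for $x\in A_2$. A map given by separate continuous formulas on two clopen pieces is continuous, and $f(e_1)=\sigma_W(\ast)^{-1}\sigma_W(\ast)$ is the identity of $F_M(W)$. The Graev universal property then yields a unique continuous homomorphism $\Phi$ with $\Phi\sigma_{\ast}=f$. For the reverse direction, define $\tilde g\colon X\to F_G(X,e_1)$ by $\tilde g(x)=\sigma_{\ast}(e_2)\sigma_{\ast}(x)$ for $x\in A_1$ and $\tilde g(x)=\sigma_{\ast}(x)$ for $x\in A_2$; clopenness again gives continuity, and since $\tilde g(e_1)=\sigma_{\ast}(e_2)=\tilde g(e_2)$ this map descends to a continuous map $g\colon W\to F_G(X,e_1)$. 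The Markov universal property then produces a continuous homomorphism $\Psi\colon F_M(W)\to F_G(X,e_1)$ with $\Psi\sigma_W=g$.

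Finally I would verify $\Psi\Phi=\mathrm{id}$ and $\Phi\Psi=\mathrm{id}$. Since, algebraically, $F_G(X,e_1)$ is free on $\sigma_{\ast}(X\setminus\{e_1\})$ and $F_M(W)$ is free on $\sigma_W(W)$, a homomorphism out of either group is determined by its values on these generating sets, so it suffices to evaluate the two composites there. A short bookkeeping computation, tracking the shift by $\sigma_W(\ast)$ in one direction and by $\sigma_{\ast}(e_2)$ in the other, returns each generator to itself; for instance $\Psi\Phi(\sigma_{\ast}(x))=\sigma_{\ast}(e_2)^{-1}\sigma_{\ast}(e_2)\sigma_{\ast}(x)=\sigma_{\ast}(x)$ for $x\in A_1\setminus\{e_1\}$. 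Being mutually inverse continuous homomorphisms, $\Phi$ and $\Psi$ are isomorphisms of topological groups.

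The main obstacle is the continuity of the two shifted assignments $f$ and $\tilde g$. Each is forced to use different formulas on $A_1$ and on $A_2$ in order to meet the basepoint requirement $f(e_1)=1$ and the descent requirement $\tilde g(e_1)=\tilde g(e_2)$, and such piecewise definitions are continuous precisely because the hypothesis makes $A_1$ and $A_2$ clopen. This is exactly the point at which the disconnectedness of $X$ is used; without it the required shift could not be performed continuously. Everything else—the invocation of the universal properties and the generator-level verification—is routine.
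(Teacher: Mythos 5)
Your proof is correct and is essentially the paper's own argument: the paper likewise constructs $\tilde{\psi}\colon F_G(X,e_1)\to F_M(A_1\vee A_2)$ and $\hat{\phi}\colon F_M(A_1\vee A_2)\to F_G(X,e_1)$ from the two universal properties via basepoint-shifted maps ($a\mapsto ae_2^{-1}$ and $a\mapsto ae_2$ on $A_1$, the identity on $A_2$), with continuity coming from the clopen decomposition, and then verifies the composites on generators. The only cosmetic differences are that you multiply the shift on the left rather than the right and map directly into $F_M(A_1\vee A_2)$ instead of factoring through $F_M(X)$ and $F_M(q)$.
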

\begin{proof}
Let $q:X\to A_1\vee A_2$ be the quotient map making the identification $q(e_1)=z=q(e_2)$. Define a map $f:X\to \fmx$ by $f(a)=ae_2^{-1}$ for $a\in A_1$ (here $a_1e_{2}^{-1}$ is the product in $\fmx$) and $f(a)=a$ for $a\in A_2$. Since $F_M(q)(e_1)=F_{M}(q)(e_2)$, the composition $\psi=F_{M}(q)f:X\to F_{M}(A_1\vee A_2)$ takes $e_1$ to the identity of $F_{M}(A_1\vee A_2)$ and induces a continuous homomorphism $\tilde{\psi}:F_G(X,e_1)\to F_{M}(A_1\vee A_2)$. Note that $\tilde{\psi}(e_2)=z$.

Now consider the map $g:X\to F_G(X,e_1)$ where $g(a)=ae_2$, $a\in A_1$ is the product in $F_G(X,e_1)$ and $g(a)=a$, $a\in A_2$. Since $e_1$ is the identity in $F_{G}(X,e_1)$, $g(e_1)=e_1e_2=e_2=g(e_2)$. We obtain a continuous map $\phi:A_1\vee A_2\to F_G(X,e_1)$ on the quotient such that $\phi(z)=e_2$ and which induces a continuous homomorphism $\hat{\phi}:F_{M}(A_1\vee A_2)\to F_G(X,e_1)$.

A direct check shows that $\hat{\phi} \tilde{\psi}$ is the identity homomorphism of $F_{G}(X,e_1)$ and $\tilde{\psi}\hat{\phi}$ is the identity of $F_{M}(A_1\vee A_2)$. In particular, if $a\in A_1\backslash\{e_1\}$, then $\hat{\phi}\tilde{\psi}(a)=\hat{\phi}(ae_{2}^{-1})= \hat{\phi}(a)\hat{\phi}(e_2)^{-1}=(ae_2)e_{2}^{-1}=a$ and $\tilde{\psi}\hat{\phi}(a)=\tilde{\psi}(ae_2)=\tilde{\psi}(a)\tilde{\psi}(e_2) =ae_{2}^{-1}e_2=a$. The other cases are straightforward and left to the reader.
\end{proof}
\begin{theorem}\label{connected2}
For any space $X$, the following are equivalent.
\begin{enumerate}
\item $X$ is connected.
\item $\fgx$ is connected.
\item $\fgx$ is not isomorphic to a free Markov topological group.
\end{enumerate}
\end{theorem}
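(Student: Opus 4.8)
The plan is to prove the cycle of implications $(1)\Rightarrow(2)\Rightarrow(3)\Rightarrow(1)$, which closes the equivalence most economically. The overall strategy rests on Lemma \ref{connected} as the structural engine: it identifies precisely the situation in which $\fgx$ splits off as a free Markov group, namely when $X$ decomposes into a disjoint union of two open sets. The burden of the theorem is to show that such a decomposition is the \emph{only} way $\fgx$ can fail to be connected or fail to avoid being a free Markov group.

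\smallskip

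First I would establish $(1)\Rightarrow(2)$. Since $\sigma_{\ast}:X\to\fgx$ is continuous and $X$ is connected, the image $\sigma_{\ast}(X)$ is a connected subset of $\fgx$ containing the identity. The subgroup generated by a connected subset containing the identity is itself connected (the set of words of length $\le n$ is a continuous image of a product of copies of $\sigma_{\ast}(X)$ and its inverse, hence connected and containing the identity; the whole group is the increasing union of these). As $\sigma_{\ast}(X)$ generates $\fgx$ algebraically, $\fgx$ is connected. The implication $(2)\Rightarrow(3)$ is almost immediate once one observes that a free Markov topological group $\fmy$ is always disconnected: the length homomorphism $\fmy\to\mathbb{Z}$ (abelianization composed with augmentation, or more directly reduction mod the relation collapsing all generators to a single generator) is a continuous surjection onto the discrete group $\mathbb{Z}$, so $\fmy$ has at least two, in fact infinitely many, path components and cannot be connected. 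Thus a connected $\fgx$ cannot be isomorphic to any $\fmy$, giving $(3)$ in contrapositive form.

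\smallskip

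The substantive step, and the one I expect to be the main obstacle, is $(3)\Rightarrow(1)$, which I would prove by contraposition: assuming $X$ is disconnected, produce an isomorphism of $\fgx$ with a free Markov topological group. If $X$ is disconnected, write $X=U\sqcup V$ with $U,V$ nonempty and open (hence also closed in $X$). The basepoint $\ast$ lies in one of them, say $\ast\in U$; choose any point $e_2\in V$. Setting $A_1=U$, $A_2=V$, $e_1=\ast$, I would invoke Lemma \ref{connected} directly to conclude that $F_G(X,\ast)=F_G(X,e_1)$ is isomorphic to the free Markov topological group $F_M(A_1\vee A_2)$. This exhibits $\fgx$ as a free Markov topological group, contradicting $(3)$, so $X$ must be connected.

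\smallskip

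The delicate point in the last step is whether a disconnection of $X$ gives exactly the hypothesis of Lemma \ref{connected}: the lemma requires $X$ to be a disjoint union of two open sets with chosen basepoints, which is precisely what a disconnection provides once we absorb any finer decomposition into just two clopen pieces $U$ and $V$. One should check that the basepoint-independence of the Graev construction (Graev's \cite[Theorem 2]{Graev}, quoted in the excerpt) is not even needed here, since we are free to take $e_1=\ast$ as the given basepoint; the only genuine choice is the auxiliary point $e_2\in V$, and the isomorphism class of $F_M(A_1\vee A_2)$ does not depend on it. Assembling the three implications then yields the full equivalence.
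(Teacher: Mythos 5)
Your proposal is correct and takes essentially the same route as the paper: the same cycle $(1)\Rightarrow(2)\Rightarrow(3)\Rightarrow(1)$, with the quotient onto the discrete group $\mathbb{Z}$ induced by collapsing the generating space to a point for $(2)\Rightarrow(3)$, and Lemma \ref{connected} applied contrapositively (taking $e_1=\ast$, so basepoint-independence is indeed not needed) for $(3)\Rightarrow(1)$. The only cosmetic difference is in $(1)\Rightarrow(2)$: you argue directly that the subgroup generated by a connected set containing the identity is connected via the increasing union of word-length-bounded sets, whereas the paper simply cites the fact that the identity component of a topological group is a subgroup.
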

\begin{proof}
1. $\Rightarrow$ 2. Suppose $X$ is connected and let $C$ be the connected component of the identity in $\fgx$. Since $\sigma_{\ast}:X\to \fgx$ is continuous, the generating set $\sigma_{\ast}(X)$ is a connected subspace of $\fgx$ containing the identity and is therefore contained in $C$. In a general topological group, the connected component of the identity element is a subgroup \cite[1.4.26]{AT}. Therefore $C=\fgx$.\\
2. $\Rightarrow$ 3. Every free Markov topological group is disconnected since the canonical map $X\to \ast$ collapsing $X$ to a point induces a continuous homomorphism $\fmx\to F_M(\ast)=\mathbb{Z}$ onto to the discrete group of integers. Therefore, if $\fgx$ is connected, $\fgx$ cannot be isomorphic to a free Markov topological group.\\
3. $\Rightarrow$ 1. This follows directly from Lemma \ref{connected}.
\end{proof}
Combining Theorems \ref{nielsonshreier} and \ref{connected2} and the fact that every free Markov topological group is a free Graev topological group, we obtain a structure theorem for open subgroups of free Markov topological groups. This result generalizes that in \cite{BHsubgroup} for free Markov topological groups on Hausdorff $k_{\omega}$-spaces.
\begin{theorem}\label{mainresult2}
An open subgroup of a free Markov topological group is a free Markov topological group if and only if it is disconnected.
\end{theorem}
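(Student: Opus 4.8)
The plan is to deduce this structure theorem directly from Theorem \ref{nielsonshreier} and Theorem \ref{connected2}, with almost no additional work, since those two results together already control both the ``free Graev'' and the ``connectedness'' aspects of open subgroups. Throughout, let $G=\fmy$ be a free Markov topological group and let $H\leq G$ be an open subgroup.

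For the forward implication, suppose $H$ is itself a free Markov topological group. Then I would simply invoke the observation recorded in the proof of Theorem \ref{connected2} (the step $2\Rightarrow 3$) that every free Markov topological group is disconnected: the map collapsing the generating space to a point induces a continuous surjective homomorphism onto the discrete group $\mathbb{Z}$, which witnesses the disconnectedness. Hence $H$ is disconnected.

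For the converse, the crucial first step is to pass from Markov to Graev. Recalling from Section \ref{sectionftg} that $\fmy$ is isomorphic as a topological group to the free Graev topological group $F_G(Y_+,\ast)$ on $Y_+=Y\sqcup\{\ast\}$ with isolated basepoint, I would regard $H$ as an open subgroup of the free Graev topological group $F_G(Y_+,\ast)$. Theorem \ref{nielsonshreier} then yields that $H$ is a free Graev topological group, say $H\cong \fgx$ for some based space $(X,\ast)$.

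It remains to upgrade ``free Graev'' to ``free Markov'' using the disconnectedness hypothesis. Since $H$ is disconnected and $H\cong \fgx$, the group $\fgx$ is not connected. Applying Theorem \ref{connected2}, specifically the equivalence of the negations of conditions $(2)$ and $(3)$, a free Graev topological group that fails to be connected is isomorphic to a free Markov topological group. Therefore $H$ is a free Markov topological group, completing the proof. The only point requiring care is the Markov-to-Graev translation at the start of the converse, which is precisely what makes Theorem \ref{nielsonshreier} applicable; beyond that the argument is a formal combination of the two cited theorems, so I do not anticipate a genuine obstacle.
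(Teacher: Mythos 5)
Your proposal is correct and follows exactly the route the paper intends: the paper proves this theorem by combining Theorem \ref{nielsonshreier}, Theorem \ref{connected2}, and the identification $\fmy\cong F_G(Y_+,\ast)$, which is precisely your argument in both directions. The one place needing care --- translating the Markov group into a Graev group via the isolated basepoint so that Theorem \ref{nielsonshreier} applies, and then using the equivalence of the negations of $(2)$ and $(3)$ in Theorem \ref{connected2} to pass back --- is handled correctly.
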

\section{Topologically enriched graphs and categories}\label{sectiongraphsandcats}
The rest of this paper is devoted to a proof of Theorem \ref{nielsonshreier}.
\subsection{$\spaces$-graphs}
A $\spaces$-\textit{graph} $\gam$ consists of a discrete space of vertices $\gam_0$, an edge space $\gam$, and continuous structure maps $\partial_{0},\partial_{1}:\gam\to \gam_0$. For convenience, we sometimes let $\gam$ denote the $\spaces$-graph itself. The set of composable edges in $\gam$ is the pullback $\gam\times_{\gamz}\gam=\{(e,e')|\partial_1(e)=\partial_0(e')\}$.

For each pair of vertices $x,y\in \gam_0$, let $\gam_x=\partial_{0}^{-1}(x)$, $\gam^y=\partial_{1}^{-1}(y)$, and $\gam(x,y)=\gam_x\cap \gam^y$. Since we require the vertex space of a $\spaces$-graph to be discrete, the edge space decomposes as the topological sum $\gam=\coprod_{(x,y)\in X\times X}\gam(x,y)$ over ordered pairs of vertices.

Since it is possible that both $\gam(x,y)$ and $\gam(y,x)$ are non-empty, we are motivated to make the following construction. Let $\gam(x,y)^{-1}$ denote a homeomorphic copy of $\gam(x,y)$ for each pair $(x,y)\in \gamz\times \gamz$. Here $e\in \gam(x,y)$ corresponds to $e^{-1}\in \gam(x,y)^{-1}$. Define a new $\spaces$-graph $\gamast$ to have vertex spaces $\gam_0$ and $\gamast(x,y)=\gam(x,y)\sqcup \gam(y,x)^{-1}$. In particular, note that $\gamast(x,x)=\gam(x,x)\sqcup \gam(x,x)^{-1}$.

A morphism $f:\gam\to \gam '$ of $\spaces$-graphs consists of a pair of continuous functions $(f_0,f):(\gam_0,\gam)\to (\gam_{0}^{\prime},\gam^{\prime})$  such that $\partial_{i}'\circ f=f_0\circ \partial_i$, $i=1,2$. Such a morphism is said to be \textit{quotient} if $f_0$ and $f$ are quotient maps of spaces (note $f_0$ only needs to be surjective to be quotient). There is also an obvious notion of sub-$\spaces$-graph $S\subseteq \gam$. We say such a sub-$\spaces$-graph is \textit{wide} if $S_0=\gam_0$. The category of $\spaces$-graphs is denoted $\topgraph$.

\begin{definition}
\emph{The \textit{geometric realization} of a $\spaces$-graph $\gam$ is the topological space \[|\gam|=\gam_0\sqcup( \gam\times [0,1])/\sim\text{ where }\partial_i(\alpha)\sim (\alpha,i)\text{ for }i=0,1.\] A $\spaces$-graph $\gam$ is \textit{connected} if $|\gam|$ is path connected, or equivalently, if for each $x,y\in \gam_0$, there is a sequence of vertices $x=a_1,a_2,...,a_n=y$ such that $\gam^{\pm}(a_j,a_{j+1})\neq \emptyset$ for $j=1,...,n-1$.
}
\end{definition}
We typically assume $\spaces$-graphs are connected.
\begin{remark}\emph{
For any $0<r<1$, the image of $\gam_x\times [0,r]$ in the quotient $|\gam|$ is homeomorphic to the cone $CX$ on $X$. Similarly, if $Z=\gam(x,y)\sqcup \gam(y,x)$, then the image of $Z \times [0,1]$ in the $|\gam|$ is the unreduced suspension $SZ$. Finally, note \[|\gam|\backslash\gam_0=\coprod_{(x,y)}\left(\gam(x,y)\times (0,1)\right).\]}
\end{remark}
 \begin{figure}[H] \centering \includegraphics[height=2in]{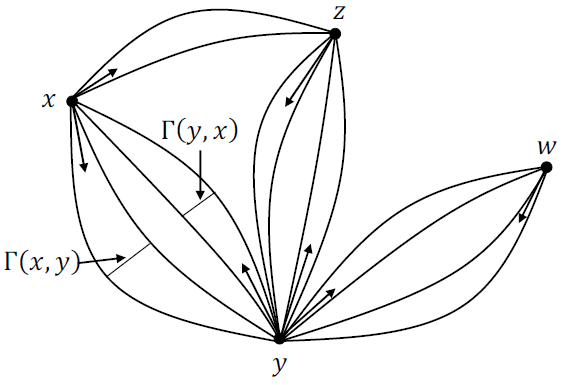}
\caption{The realization of a $\spaces$-graph $\gam$ with four vertices. Here $\gam(z,x)$, $\gam(w,z)$, and $\gam(z,w)$ are all empty.}
\end{figure}
\begin{remark}\label{basisgam}\emph{
It is unfortunate that $|\gam|$ need not be first countable at its vertices, however, it is possible to change the topology on $|\gam|$ without changing homotopy type so that each vertex has a countable neighborhood base. The \textit{vertex neighborhood of} $x\in \gam_0$ of radius $r\in (0,1)$ is the image of\[\left(\gam_x\times [0,r)\right) \cup \left(\gam^x\times (1-r,1]\right)\]and is denoted $B(x,r)$. An \textit{edge neighborhood} of a point $(e,t)\in \gam(x,y)\times (0,1)$ is the homeomorphic image of a set $U\times (a,b)$ where $U$ is an open neighborhood of $e$ in $\gam(x,y)$ and $0<a<t<b<1$. The basis consisting of vertex and edge neighborhoods is closed under finite intersection and generates a topology which may be strictly coarser than the quotient topology (but only at vertices). Note that each vertex neighborhood $B(x,r)$ is contractible onto $x$ and the set $\{B(x,1/n)|n\geq 1\}$ is a countable neighborhood base at $x$.
}
\end{remark}
From now on, we assume $|\gam|$ has the courser topology generated by vertex and edge neighborhoods.
\begin{example}
\emph{
If $\gam$ is a $\spaces$-graph with a single vertex, then $|\gam|$ is the \textit{generalized wedge of circles} $\Sigma(\gam_+)$ (where $\Sigma$ denotes reduced suspension) studied in detail in \cite{Brazfretopgrp}. When $\gam_0=\{x_0,x_1\}$ and the structure maps are the two constant maps $\partial_i:\gam\to \gam_0$, $\partial_{i}(\alpha)=x_i$ (equivalently, $\gam=\gam(x_0,x_1)$), then $|\gam|$ is the unreduced suspension $S\gam$. Thus, unlike a discrete graph, a $\spaces$-graph may be simply connected but not contractible, e.g. $\gam=S^1$.
}
\end{example}
\begin{pathcomponentspaces}
\emph{
We end this section with a useful construction on $\spaces$-graphs: The \textit{path component space} of a topological space $X$ is the quotient space $\pi_0 (X)$ where each path component is identified to a point. If $\gam$ is a $\spaces$-graph, then $\pi_0(\gam)$ is the $\spaces$-graph with vertex space $\gam_0$ and $\pi_0(\gam)(x,y)=\pi_0(\gam(x,y))$. The canonical quotient morphism of $\spaces$-graphs $q:\gam\to \pi_0(\gam)$ is the identity on vertices and takes an edge $e$ to its path component $[e]$.}
\end{pathcomponentspaces}
\begin{remark}\label{everygraphisapcgraph}
\emph{
Another useful construction is a section to the path component functor $\pi_0:\topgraph\to\topgraph$. Given any space $X$, there a (paracompact Hausdorff) space $h(X)$ and a natural homeomorphism $\pi_0(h(X))\cong X$ \cite{Har80}. Thus for any $\spaces$-graph $\gam$, we define $h(\gam)$ to have object space $\gam_0$ and $h(\gam)(x,y)=h(\gam(x,y))$ so that $\pi_0(h(\gam))\cong \gam$.
}\end{remark}
\subsection{$\spaces$-categories and $\qtop$-categories}
Our use of enriched categories aligns with that in \cite{KellyEnriched}. If a $\spaces$-graph $\mcc$ comes equipped with continuous composition map $\mcc\times_{\mcc_0}\mcc\to \mcc$ making $\mcc$ a category in the usual way, then $\mcc$ is a $\spaces$\textit{-category} (or a category enriched over $\spaces$). Since $Ob(\mcc)=\mcc_0$ is discrete, $\mcc\times_{\mcc_0}\mcc$ decomposes as a topological sum of products $\mcc(x,y)\times\mcc(y,z)$. Thus to specify a $\spaces$-category one only need specify the hom-spaces $\mcc(x,y)$ and continuous composition maps $\mcc(x,y)\times\mcc(y,z)\to \mcc(x,z)$. If composition maps are only continuous in each variable, then $\mcc$ is an $\stopol$-category (the "s" is for "semitopological" as in \cite{AT}). A $\spaces$-functor $F:\mcc\to\mathcal{D}$ of $\spaces$-categories is a functor such that each function $F_{x,y}:\mcc(x,y)\to \mathcal{D}(F(x),F(y))$ is continuous. The category of $\spaces$-categories and $\spaces$-functors is denoted $\topcat$.

An \textit{involution} on a small category $\mcc$ is a function $\mcc\to \mcc$ defined by functions $\mcc(x,y)\to \mcc(y,x)$, $f\mapsto f^{\ast}$ such that $(f^{\ast})^{\ast}=f$, $(fg)^{\ast}=g^{\ast}f^{\ast}$, and $(id_{x})^{\ast}=id_x$. A $\spaces$-category (resp. a $\stopol$-category) equipped with a continuous involution is a $\spaces$\textit{-category with continuous involution} (resp. a $\qtop$\textit{-category}). If $\mcg$ is a $\spaces$-category (resp. $\stopol$-category) whose underlying category is a groupoid and the involution given by the inversion functions $\mcg(x,y)\to \mcg(y,x)$ is continuous, then $\mcg$ is a $\spaces$\textit{-groupoid} (resp. $\qtop$-groupoid).

A functor $F:\mcc\to\mathcal{D}$ of categories with involution \textit{preserves involution} if $F(f^{\ast})=F(f)^{\ast}$. In particular, a $\qtop$-functor $F:\mcc\to \mathcal{D}$ of $\qtop$-categories is an involution preserving functor which is continuous on hom-spaces.

The notion of $\qtop$-groupoid is particularly relevant and is studied in Section 4 of \cite{Brazsemi}. The following Lemma is a useful fact asserting that the category $\mathbf{TopGrpd}$ of $\spaces$-groupoids is a full reflective subcategory of the category $\mathbf{qTopGrpd}$ of $\qtop$-groupoids.

\begin{lemma}\label{reflection}
\cite[Lemma 4.5]{Brazsemi} The forgetful functor $\mathbf{TopGrpd}\to \mathbf{qTopGrpd}$ has a left adjoint $\tau:\mathbf{qTopGrpd}\to \mathbf{TopGrpd}$ which is the identity on the underlying groupoids and functors.
\end{lemma}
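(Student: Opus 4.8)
The plan is to construct the left adjoint $\tau$ explicitly by retopologizing the hom-spaces of a $\qtop$-groupoid so as to force continuity of composition in both variables simultaneously, while leaving the underlying (abstract) groupoid untouched. Given a $\qtop$-groupoid $\mcg$, I would not change its objects, morphisms, or the involution; instead I would replace each hom-space topology by the finest group/groupoid-compatible topology coarser than the given one. Concretely, for the groupoid structure to be a $\spaces$-groupoid we need the composition maps $\mcg(x,y)\times\mcg(y,z)\to\mcg(x,z)$ to be jointly continuous; the natural move is to declare $\tau(\mcg)(x,y)$ to carry the final topology with respect to all composition maps coming from the hom-spaces, i.e. to coarsen the topology minimally so that joint continuity holds. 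Since inversion and involution are already continuous and only the bilinearity of composition is at issue, I expect $\tau$ to be realized as an idempotent coreflection-type retopologization at the level of hom-spaces.

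The key steps, in order, would be: (1) Fix the underlying groupoid and involution of $\mcg$ and describe precisely the new topology $\mathcal{T}_{x,y}$ on each hom-set, defined so that the composition maps become jointly continuous; the candidate is the coarsest topology finer than making all the continuous-in-each-variable maps jointly continuous, or equivalently a topology generated by the given one together with the requirement that composition factor continuously. (2) Verify that with these topologies $\tau(\mcg)$ is a genuine $\spaces$-groupoid: composition is jointly continuous by construction, inversion is continuous (since the new topology relates well to the old one on which inversion was already continuous), and the groupoid axioms are unchanged because we altered only the topology. (3) Check that the identity-on-morphisms map $\mcg\to\tau(\mcg)$ is a $\qtop$-functor, i.e. continuous on hom-spaces — this should hold because $\tau(\mcg)$ carries a coarser (or comparable) topology so the identity into it is continuous. (4) Establish the universal property: given any $\spaces$-groupoid $\mcd$ and a $\qtop$-functor $F:\mcg\to\mcd$, show $F$ factors as $\tau(\mcg)\to\mcd$ through an honest $\spaces$-functor. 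Since $\mcd$ already has jointly continuous composition and $F$ is continuous on the original hom-spaces, the factorization is the same function on morphisms, and continuity with respect to the new topology follows because the new topology was built to be the smallest one making composition jointly continuous — so continuous-in-each-variable maps into $\spaces$-groupoids automatically become continuous out of $\tau(\mcg)$.

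Functoriality of $\tau$ and the adjunction bijection then follow formally: the unit is the identity-on-morphisms $\qtop$-functor $\mcg\to U\tau(\mcg)$, the counit is the identity on objects and morphisms $\tau(U\mcd)\to\mcd$ (which is a $\spaces$-functor since $\mcd$ was already a $\spaces$-groupoid, so its topology already makes composition jointly continuous and hence agrees with, or is refined correctly by, $\tau$), and the triangle identities hold because everything is the identity on underlying groupoids. I would note that because $\mcd$ is already jointly continuous, $\tau U\mcd=\mcd$, which gives that $\tgrph$ sits inside $\qtopgrp$ as a full reflective subcategory with $\tau$ a reflector that is the identity on the full subcategory of $\spaces$-groupoids.

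I expect the main obstacle to be step (1) together with the continuity direction in step (4): pinning down the correct new topology so that it genuinely yields joint continuity of composition (rather than merely separate continuity) while remaining coarse enough that every separately-continuous $F$ into a $\spaces$-groupoid becomes continuous out of $\tau(\mcg)$. The subtlety is that coarsening a topology can destroy continuity of composition just as easily as it can create it, so the definition must be chosen to thread this needle — and verifying that the universal map's continuity follows purely from the separate continuity of $F$ is exactly the nontrivial content that distinguishes a $\qtop$-groupoid from a $\spaces$-groupoid. Since the cited reference \cite[Lemma 4.5]{Brazsemi} already supplies this construction, I would ultimately defer the detailed verification there and use the stated adjunction as a black box in the sequel.
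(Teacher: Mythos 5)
The first thing to note is that the paper itself gives no proof of this statement: it imports it verbatim, with citation, from \cite[Lemma 4.5]{Brazsemi}, so the relevant comparison is with the construction in that reference, which your sketch partially reconstructs. Your overall skeleton does match that construction: keep the underlying groupoid and involution fixed, coarsen each hom-topology to the finest $\spaces$-groupoid topology coarser than the given one, take the unit $\mcg\to\tau(\mcg)$ and counit $\tau(\mathcal{D})\to\mathcal{D}$ to be identities on underlying groupoids, and observe that $\tau$ fixes $\spaces$-groupoids, yielding a full reflective subcategory. Deferring the detailed verification to the citation is also exactly what the paper does, so as a \emph{usage} of the lemma your proposal is fine.

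As a \emph{reconstruction}, however, steps (1) and (4) contain genuine gaps. Your candidate definitions of the new topology are mutually inconsistent, and two of them are wrong: the ``final topology with respect to all composition maps'' fails in one pass, because coarsening the codomain $\mcg(x,z)$ does nothing to the domain factors $\mcg(x,y)\times\mcg(y,z)$, whose topologies must themselves be coarsened, so joint continuity is not achieved after a single step and the process would have to be iterated; and ``the smallest topology making composition jointly continuous'' is trivially the indiscrete topology, which destroys the universal property. The correct definition --- which you do state once --- is the \emph{finest} $\spaces$-groupoid topology coarser than the given one, and its existence requires an argument you omit: the supremum of any family of $\spaces$-groupoid topologies on a fixed groupoid is again a $\spaces$-groupoid topology, since the preimage under composition (or inversion) of a subbasic open set belonging to one member of the family is open already in that member's product topology. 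Likewise, your step (4) claim that ``continuous-in-each-variable maps into $\spaces$-groupoids automatically become continuous out of $\tau(\mcg)$'' is false for arbitrary separately continuous maps and is not how the factorization is proved. The actual device is the initial topology: given a $\qtop$-functor $F:\mcg\to\mathcal{D}$ with $\mathcal{D}$ a $\spaces$-groupoid, the topologies generated by $\{F^{-1}(V)\}$ on the hom-sets of $\mcg$ form a $\spaces$-groupoid topology, because $\circ^{-1}(F^{-1}(V))=(F\times F)^{-1}\bigl(\circ_{\mathcal{D}}^{-1}(V)\bigr)$, and this topology is coarser than the original since $F$ is hom-continuous; hence it lies below the supremum defining $\tau(\mcg)$, and $F$ is continuous from $\tau(\mcg)$. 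This is precisely the ``needle'' you correctly identified as the nontrivial content but left unthreaded; the supremum and initial-topology arguments are what close it.
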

\begin{freetopcat}
\emph{
The \textit{free} $\spaces$\textit{-category} generated by a $\spaces$-graph $\gam$ is the $\spaces$-category $\cgam$ with object space $\gam_0$ and in which morphisms are finite sequences $e_{1}e_{2}...e_{n}$ of composable edges $e_i\in\gam$. In particular, the hom-space $\cgam(x,y)$ is topologized as the topological sum $\coprod \gam(x,a_1)\times \gam(a_1,a_2)\times \dots \times\gam(a_n,y)$ where the sum ranges over all finite sequences $a_1,...,a_n$ in $\gam_0$. In order to obtain a category, we add an isolated identity morphism $\{id_x\}$ to each space $\cgam(x,x)$. Note this construction yields a functor $\scrc:\topgraph\to \topcat$ left adjoint to the forgetful functor $\topcat\to \topgraph$.\\
\indent The construction of $\cgam$ is easily modified to include a continuous involution. In particular, the free $\spaces$-category with (continuous) involution on $\gam$ is $\scrc^{\pm}(\gam)=\scrc(\gamast)$, the free-$\spaces$ category on the $\spaces$-graph $\gamast$ described in the previous section. Thus a generic (non-identity) morphism of $\scrc^{\pm}(\gam)$ may be given by a sequence $e_{1}^{\delta_1}e_{2}^{\delta_2}...e_{n}^{\delta_n}$ where $e_i\in \gam$, $\delta_i\in \{\pm 1\}$.
}
\end{freetopcat}
\begin{remark}
\emph{
The construction of the path component $\spaces$-graph $\pi_0(\gam)$ also applies to $\spaces$-categories. Recall that for spaces $X,Y$, there is a canonical, continuous bijection $\psi:\pi_0(X\times Y)\to \pi_0(X)\times \pi_0(Y)$ which is not necessarily a homeomorphism \cite{Brazfretopgrp}. Consequently, if $\gam$ is a $\spaces$-category (with continuous involution), then $\pi_0(\gam)$ naturally inherits the structure of a $\stopol$-category ($\qtop$-category) but is not always a $\spaces$-category (with continuous involution). While it is possible to avoid this difficulty by restricting to a cartesian closed category of spaces, we remain in the usual topological category in order to prove Theorem \ref{nielsonshreier} in full generality.
}
\end{remark}
The observation on products in the previous remark immediately extends to the following lemma.
\begin{lemma}\label{canonicalfunctor}
Given a $\spaces$-graph $\gam$, there is a canonical $\qtop$-functor $\psi:\pi_0(\scrc^{\pm}(\gam))\to \scrc^{\pm}(\pi_0(\gam))$ given by $[e_{1}^{\delta_1}e_{2}^{\delta_2}...e_{n}^{\delta_n}]\mapsto [e_1]^{\delta_1}[e_2]^{\delta_2}...[e_n]^{\delta_n}$ which is an isomorphism of the underlying categories.
\end{lemma}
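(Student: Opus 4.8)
The plan is to unwind both sides into topological coproducts of products of edge-spaces and to observe that $\psi$ assembles, summand by summand, the canonical continuous bijections $\pi_0(X\times Y)\to\pi_0(X)\times\pi_0(Y)$ recalled in the previous remark. First I would record the purely combinatorial compatibilities of $\pi_0$ with the structural constructions. Since $\pi_0$ preserves topological coproducts and the passage to inverse edges $\gam(x,y)\mapsto\gam(x,y)^{-1}$ is a homeomorphism, one has $\pi_0(\gamast)=(\pi_0(\gam))^{\pm}$ as $\spaces$-graphs. In particular both $\pi_0(\scrc^{\pm}(\gam))$ and $\scrc^{\pm}(\pi_0(\gam))$ have object space $\gam_0$, so on objects the formula is the identity, and it sends each isolated identity $[\mathrm{id}_x]$ to $\mathrm{id}_x$.

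Next I would analyze the hom-sets. By construction the hom-space $\scrc^{\pm}(\gam)(x,y)$ is the topological sum, over finite vertex sequences $a_1,\dots,a_n$, of the products $\gamast(x,a_1)\times\cdots\times\gamast(a_n,y)$. Because $\pi_0$ preserves coproducts, $\pi_0(\scrc^{\pm}(\gam))(x,y)$ is the coproduct over the same sequences of the spaces $\pi_0\big(\gamast(x,a_1)\times\cdots\times\gamast(a_n,y)\big)$, whereas $\scrc^{\pm}(\pi_0(\gam))(x,y)$ is the coproduct over these sequences of $\pi_0(\gamast(x,a_1))\times\cdots\times\pi_0(\gamast(a_n,y))$. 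The two index sets coincide, and on the summand indexed by $(a_1,\dots,a_n)$ the map $\psi$ is exactly the canonical continuous bijection $\pi_0(A_1\times\cdots\times A_n)\to\pi_0(A_1)\times\cdots\times\pi_0(A_n)$ obtained by iterating the comparison bijection $\pi_0(A\times B)\to\pi_0(A)\times\pi_0(B)$ of the previous remark; a straightforward induction on $n$ shows this iterate sends $[(e_1,\dots,e_n)]$ to $([e_1],\dots,[e_n])$, which is the asserted formula. Since each summand map is a continuous bijection and the index sets match, $\psi$ is well defined and bijective on every hom-set, and its continuity follows because a map out of a coproduct is continuous precisely when each restriction to a summand is; this makes $\psi$ a $\qtop$-functor.

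It then remains to check that $\psi$ respects the algebraic structure. Composition in $\scrc^{\pm}(\gam)$ and in $\scrc^{\pm}(\pi_0(\gam))$ is concatenation of edge words, and the involution reverses a word while inverting each letter; since the formula $[e_{1}^{\delta_1}\cdots e_{n}^{\delta_n}]\mapsto[e_1]^{\delta_1}\cdots[e_n]^{\delta_n}$ manifestly commutes with both concatenation and reverse-and-invert, $\psi$ is an involution-preserving functor. Combined with the bijectivity established above, $\psi$ is an isomorphism of the underlying categories with involution.

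I expect the main subtlety to be resisting the temptation to claim more. The assembled summand map $\pi_0(A_1\times\cdots\times A_n)\to\pi_0(A_1)\times\cdots\times\pi_0(A_n)$ is only a continuous bijection, not a homeomorphism, since the remark's comparison map for products of path component spaces generally fails to be open. Consequently $\psi$ is genuinely an isomorphism of the underlying (discrete) categories and a $\qtop$-functor, but one cannot expect a continuous inverse, so $\psi$ need not be an isomorphism of $\spaces$-categories; the statement is calibrated to exactly what the product comparison permits. The only other point needing care is the bookkeeping of the two coproduct index sets and confirming that the isolated identities are matched correctly.
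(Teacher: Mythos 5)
Your proof is correct and is exactly the argument the paper intends: the paper offers no separate proof, stating only that the product comparison bijection of the preceding remark ``immediately extends'' to this lemma, and your decomposition of hom-spaces into coproducts of products with the iterated map $\pi_0(A_1\times\cdots\times A_n)\to\pi_0(A_1)\times\cdots\times\pi_0(A_n)$ is precisely that extension spelled out. Your closing caveat --- that $\psi$ is only a continuous bijection on hom-spaces, hence a $\qtop$-functor and an isomorphism of underlying categories but not of $\spaces$-categories --- correctly identifies why the lemma is stated in exactly this form.
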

\begin{freetopgroupoids} \label{freetopgroupoids}
\emph{
Given a $\spaces$-graph $\gam$, the free $\spaces$-groupoid generated by $\gam$ is denoted $\mathscr{F}(\gam)$ and is characterized by the following universal property: If $\mathcal{G}$ is a $\spaces$-groupoid any $\spaces$-graph morphism $f:\gam\to \mathcal{G}$ extends uniquely to a $\spaces$-functor $\hat{f}:\mathscr{F}(\gam)\to \mathcal{G}$. In other words, $\mathscr{F}:\topgraph\to\mathbf{TopGrpd}$ is left adjoint to the forgetful functor $\mathbf{TopGrpd}\to \topgraph$.\\
\indent The underlying groupoid of $\mathscr{F}(\gam)$ is simply the free groupoid generated by the underlying algebraic graph of $\gam$, i.e. $Ob(\mathscr{F}(\gam))=\gamz$ and a morphism is a reduced word $e_{1}^{\delta_1}e_{2}^{\delta_2}...e_{n}^{\delta_n}\in \scrc^{\pm}(\gam)$. See \cite{Brown06,Hig71} for more on free groupoids. The topological structure of $\mathscr{F}(\gam)$ is characterized as follows: Let $\mathscr{F}_{R}(\gam)$ be the free groupoid on the underlying algebraic graph of $\gam$ which is the quotient of $\scrc^{\pm}(\gam)$ with respect to the word reduction functor $R:\scrc^{\pm}(\gam)\to \mathscr{F}_{R}(\gam)$. Note that $\mathscr{F}_{R}(\gam)$ is a $\qtop$-groupoid. The free $\spaces$-groupoid is the $\tau$-reflection \[\mathscr{F}(\gam)=\tau\left(\mathscr{F}_{R}(\gam)\right).\]
It is straightforward to verify that this groupoid has the desired universal property. In the case that $\gam$ has a single vertex \cite{Brazfretopgrp}, $\mathscr{C}^{\pm}(\gam)$ is the free topological monoid with continuous involution on $\gam$ and $\mathscr{F}(\gam)$ is the free topological group $F_M(\gam)= F_G(\gam_+,\ast)$ .
}
\end{freetopgroupoids}
\subsection{Vertex groups and free topological groups}\label{vertexgroups}
We now show each vertex group of a free $\spaces$-groupoid is a free Graev topological group.
\begin{definition}
\emph{
A $\spaces$-graph $T$ is a \textit{tree} if $T$ is discrete and $|T|$ is contractible. If $\gam$ is a $\spaces$-graph, a tree $T\subseteq \gam$ is \textit{maximal} in $\gam$ if $T_0=\gam_0$. Even though $T$ is itself a discrete $\spaces$-graph, the edge space $T$ need not be open in $\gam$. A \emph{tree groupoid} is a groupoid $\mcg$ such that each set $\mcg(x,y)$ has exactly one element. Note that if $T$ is a tree, then $\mathscr{F}(T)$ is a discrete tree groupoid.}
\end{definition}
The standard argument that every graph contains a maximal tree is the same for $\spaces$-graphs.
\begin{lemma}
Every connected $\spaces$-graph contains a maximal tree.
\end{lemma}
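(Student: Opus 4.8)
The plan is to run the standard Zorn's lemma argument for the existence of a maximal tree, paying attention to the one new feature of $\spaces$-graphs, namely that the edge space $\gam$ may fail to be discrete. Let $\mathcal{T}$ denote the set of all sub-$\spaces$-graphs $S\subseteq \gam$ which are trees, partially ordered by inclusion. Since $S$ is required to be discrete, its realization $|S|$ is contractible if and only if the underlying abstract graph of $S$ is connected and has no cycles, and I will use this combinatorial characterization throughout. The poset $\mathcal{T}$ is nonempty, since any single vertex $x\in \gam_0$ with empty edge space is a tree.

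The key observation is that a tree selects at most one edge between each pair of vertices and no loops, so for every $S\in\mathcal{T}$ the edge space of $S$ meets each summand $\gam(x,y)$ of the topological sum $\gam=\coprod_{(x,y)}\gam(x,y)$ in at most one point. Because each $\gam(x,y)$ is open in $\gam$, any such singleton is open in the subspace topology, so the edge space of $S$ is automatically discrete; this is precisely the point at which the argument departs from the case of an ordinary graph, and isolating it is the main obstacle. Granting this, I would verify that every chain $\{S_i\}$ in $\mathcal{T}$ has an upper bound given by the union $U=\bigcup_i S_i$: its edge space still meets each $\gam(x,y)$ in at most one point (the $S_i$ being nested), hence is discrete; $U$ is connected because any two of its points lie in a common $S_i$; and $U$ is acyclic because a cycle would be a finite subgraph contained in some $S_i$. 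Thus $U$ is a tree and belongs to $\mathcal{T}$, so Zorn's lemma furnishes a maximal element $T$.

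Finally I would show that $T$ is maximal in the sense of the definition, i.e. that $T_0=\gam_0$. If not, choose $v\in\gam_0\setminus T_0$ and $w\in T_0$. By connectedness of $\gam$ there is a vertex sequence $w=a_1,\dots,a_n=v$ with $\gampm(a_j,a_{j+1})\neq\emptyset$ for each $j$; letting $k$ be the last index with $a_k\in T_0$ produces an edge $e$ joining $a_k\in T_0$ to $a_{k+1}\notin T_0$. Adjoining $e$ and $a_{k+1}$ to $T$ yields a sub-$\spaces$-graph $T'$ whose edge space still meets each summand in at most one point (and is therefore discrete), which is connected, and which acquires no cycle since $a_{k+1}$ is a new vertex. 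Hence $T'\in\mathcal{T}$ strictly contains $T$, contradicting maximality. Therefore $T_0=\gam_0$ and $T$ is a maximal tree.
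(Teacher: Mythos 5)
Your proof is correct and follows exactly the route the paper intends: the paper gives no proof, saying only that ``the standard argument that every graph contains a maximal tree is the same for $\spaces$-graphs,'' and your Zorn's lemma argument is that standard argument. Your isolation of the one genuinely topological point---that a tree's edge space meets each open summand $\gam(x,y)$ in at most one point, so discreteness is automatic for sub-$\spaces$-graphs that are abstractly trees---is precisely the verification the paper leaves implicit, and you handle it correctly.
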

Fix a $\spaces$-graph $\gam$, a maximal tree $T\subset \gam$, and a vertex $v\in \gamz$. Let $\mathscr{F}(\gam)(v)=\mathscr{F}(\gam)(v,v)$ be the vertex topological group at $v$. Recall that if $\gam$ has a single vertex $v$, then $\mathscr{F}(\gam)=\mathscr{F}(\gam)(v)\cong F_{M}(\gam)\cong F_{G}(\gam_+,\ast)$. Therefore, we restrict to the case when $\gam$ has more than one vertex. In this case, $T$ has non-empty edge space.

 For vertex $x\in \gamz$, let $\gamma_{v,x}$ be the unique element of $\mathscr{F}(T)(v,x)$. Define a retraction $r_T:\mathscr{F}(\gam)\to \mathscr{F}(\gam)(v)$ of groupoids so that if $\alpha\in \mathscr{F}(\gam)(x,y)$, then $r_T(\alpha)=\gamma_{v,x}\alpha\gamma_{y,v}$. By definition, if $i:\mathscr{F}(\gam)(v)\to \mathscr{F}(\gam)$ is the inclusion of the vertex group, then $r_Ti$ is the identity of $\mathscr{F}(\gam)(v)$. In fact, since composition in $\mathscr{F}(\gam)$ is continuous, we have a $\spaces$-functor.
\begin{lemma}
$r_T:\mathscr{F}(\gam)\to \mathscr{F}(\gam)(v)$ is a retraction of $\spaces$-groupoids.
\end{lemma}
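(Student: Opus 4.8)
\emph{Proof proposal.} The phrase ``retraction of $\spaces$-groupoids'' packages three things to check: that $r_T$ is a well-defined functor of the underlying groupoids, that it is continuous on hom-spaces (so that it is a $\spaces$-functor), and that $r_Ti=id_{\mathscr{F}(\gam)(v)}$. The first and third are bookkeeping built on the fact that $\mathscr{F}(T)$ is a discrete tree groupoid: each $\mathscr{F}(T)(x,y)$ is a singleton, so $\gamma_{v,v}=id_v$ and $\gamma_{y,v}=\gamma_{v,y}^{-1}$. Hence for composable $\alpha\in\mathscr{F}(\gam)(x,y)$ and $\beta\in\mathscr{F}(\gam)(y,z)$ the middle tree morphisms cancel, giving $r_T(\alpha)r_T(\beta)=\gamma_{v,x}\alpha\gamma_{y,v}\gamma_{v,y}\beta\gamma_{z,v}=\gamma_{v,x}\alpha\beta\gamma_{z,v}=r_T(\alpha\beta)$, while $r_T(id_x)=\gamma_{v,x}\gamma_{x,v}=id_v$ and $r_T(\alpha)=\gamma_{v,v}\alpha\gamma_{v,v}=\alpha$ for $\alpha\in\mathscr{F}(\gam)(v)$, which is exactly $r_Ti=id$. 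Since a functor of groupoids automatically preserves inverses, no separate check is needed for the involution.

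The real content is the continuity requirement: for each pair $x,y$ the component $r_T\colon\mathscr{F}(\gam)(x,y)\to\mathscr{F}(\gam)(r_T(x),r_T(y))=\mathscr{F}(\gam)(v,v)$ must be continuous (the object function is the constant map to $v$, hence trivially continuous). On $\mathscr{F}(\gam)(x,y)$ the map is $\alpha\mapsto\gamma_{v,x}\alpha\gamma_{y,v}$, which I would factor as the slice inclusion $\alpha\mapsto(\gamma_{v,x},\alpha,\gamma_{y,v})$ of $\mathscr{F}(\gam)(x,y)$ into $\mathscr{F}(\gam)(v,x)\times\mathscr{F}(\gam)(x,y)\times\mathscr{F}(\gam)(y,v)$ followed by the two-fold composition map into $\mathscr{F}(\gam)(v,v)$. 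The slice inclusion is a homeomorphism onto a slice of the product, hence continuous, and the composition map is jointly continuous precisely because $\mathscr{F}(\gam)=\tau(\mathscr{F}_R(\gam))$ is a genuine $\spaces$-groupoid rather than merely a $\qtop$-groupoid; composing continuous maps then yields continuity of $r_T$ on $\mathscr{F}(\gam)(x,y)$. Since $\gamz$ is discrete, this also shows $r_T$ is continuous on the whole edge space $\coprod_{(x,y)}\mathscr{F}(\gam)(x,y)$.

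I expect the only delicate point to be this appeal to joint continuity of composition. Separate continuity in each variable would not in general let one compose a jointly varying argument, but here both outer factors $\gamma_{v,x},\gamma_{y,v}$ are fixed and only $\alpha$ varies, so the passage from the $\qtop$-groupoid $\mathscr{F}_R(\gam)$ to the $\spaces$-groupoid $\mathscr{F}(\gam)$ supplied by Lemma \ref{reflection} is exactly what licenses the step cleanly. Combining continuity with the functoriality and the identity $r_Ti=id_{\mathscr{F}(\gam)(v)}$ established above shows that $r_T$ is a $\spaces$-functor splitting the inclusion $i$, i.e. a retraction of $\spaces$-groupoids.
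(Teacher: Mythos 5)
Your proposal is correct and takes essentially the same approach as the paper, which offers no formal proof beyond the one-line observation that ``since composition in $\mathscr{F}(\gam)$ is continuous, we have a $\spaces$-functor''---exactly your factorization through the composition map, with the functoriality and $r_Ti=id$ verifications being the same tree-groupoid bookkeeping. One small quibble: since the outer factors $\gamma_{v,x}$ and $\gamma_{y,v}$ are \emph{fixed}, separate continuity of composition (already available in the $\qtop$-groupoid $\mathscr{F}_R(\gam)$) would suffice for this step, so your appeal to joint continuity via the $\tau$-reflection, while perfectly valid, is not actually the delicate point you make it out to be.
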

Let $\sigma:\gam\to\mathscr{F}(\gam)$ be the canonical $\spaces$-graph morphism. It is known that the underlying group of $\mathscr{F}(\gam)(v)$ is freely generated by the set $r_T\sigma(\gam\backslash T)$ (See, for instance, \cite[8.2.3]{Brown06}).

Note that if $\gamma\in \gam$, then $r_T\sigma(\gamma)$ is the identity element of the group $\mathscr{F}(\gam)(v)$ if and only if $\gamma\in T$. Let $\gam/T$ be the quotient of the edge space $\gam$ and choose the basepoint $\ast\in\gam/T$ to be the the image of $T$. Thus the function $r_T\sigma:\gam\to \mathscr{F}(\gam)(v)$ induces a continuous injection $s:\gam/T\to \mathscr{F}(\gam)(v)$ such that $sq=r_T\sigma$ and where $s(\ast)$ is the identity element of $\mathscr{F}(\gam)$. Since $r_T\sigma(\gam\backslash T)$ freely generates $\mathscr{F}(\gam)(v)$, the continuous group homomorphism $\tilde{s}:F_G(\gam/T,\ast)\to \mathscr{F}(\gam)(v)$ induced by $s$ is an isomorphism of groups.
\begin{theorem}\label{vertexgroup}
If $\gam$ has more than one vertex and $T\subset \gam$ is a maximal tree, then the vertex group $\mathscr{F}(\gam)(v)$ is isomorphic to the free Graev topological group $F_G(\gam/T,\ast)$.
\end{theorem}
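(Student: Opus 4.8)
The plan is to use the universal property of the free $\spaces$-groupoid to construct a continuous inverse for $\tilde{s}$. As noted just before the statement, $\tilde{s}:F_G(\gam/T,\ast)\to \mathscr{F}(\gam)(v)$ is already a continuous isomorphism of the underlying abstract groups, so the entire content of the theorem is the continuity of its inverse. To produce that inverse I would first view the topological group $F_G(\gam/T,\ast)$ as a $\spaces$-groupoid $G$ with a single object. The composite $\phi=\sigma_{\ast}q:\gam\to F_G(\gam/T,\ast)$, where $q:\gam\to\gam/T$ is the quotient map and $\sigma_{\ast}:\gam/T\to F_G(\gam/T,\ast)$ is the canonical based map, is a continuous $\spaces$-graph morphism into $G$: the vertex map is forced since $G$ has one object, and the structure-map identities hold trivially. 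By the universal property in \ref{freetopgroupoids}, $\phi$ extends uniquely to a $\spaces$-functor $\hat{\phi}:\mathscr{F}(\gam)\to G$, and restricting to the hom-space $\mathscr{F}(\gam)(v)=\mathscr{F}(\gam)(v,v)$ produces a continuous homomorphism $\hat{\phi}_v:\mathscr{F}(\gam)(v)\to F_G(\gam/T,\ast)$, its continuity being exactly the assertion that $\hat{\phi}$ is continuous on hom-spaces.

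Next I would verify that $\hat{\phi}_v$ is inverse to $\tilde{s}$. Since both maps are continuous homomorphisms it suffices to check $\hat{\phi}_v\tilde{s}=\mathrm{id}$ on the generators $\sigma_{\ast}([e])$ with $e\in\gam\backslash T$. The crucial observation is that $\hat{\phi}$ annihilates tree paths: every edge of $T$ is sent by $\phi$ to $\sigma_{\ast}(\ast)$, the identity of $F_G(\gam/T,\ast)$, so $\hat{\phi}(\gamma_{v,x})$ is the identity for every $x\in\gamz$. Hence, writing $r_T\sigma(e)=\gamma_{v,\partial_0 e}\,\sigma(e)\,\gamma_{\partial_1 e,v}$, we obtain $\hat{\phi}_v(\tilde{s}(\sigma_{\ast}([e])))=\hat{\phi}(r_T\sigma(e))=\hat{\phi}(\sigma(e))=\sigma_{\ast}([e])$. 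Because $\tilde{s}$ is already a group bijection, this one-sided identity forces $\hat{\phi}_v=\tilde{s}^{-1}$, so $\tilde{s}$ has the continuous inverse $\hat{\phi}_v$ and is therefore an isomorphism of topological groups.

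The one point requiring care is that the universal property genuinely yields a \emph{continuous} functor into the one-object $\spaces$-groupoid $G$ and that its restriction to the single hom-space $\mathscr{F}(\gam)(v,v)$ is continuous; both are immediate from the definitions, so I do not expect a serious obstacle. Everything else---that $\phi$ is a $\spaces$-graph morphism and the generator computation above---is routine.
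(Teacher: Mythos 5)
Your proposal is correct and takes essentially the same route as the paper: both view $F_G(\gam/T,\ast)$ as a one-object $\spaces$-groupoid, extend $g=\sigma_{\ast}q:\gam\to F_G(\gam/T,\ast)$ to a $\spaces$-functor $\hat{g}:\mathscr{F}(\gam)\to F_G(\gam/T,\ast)$ via the universal property of the free $\spaces$-groupoid, and restrict along the inclusion of the vertex group to obtain the continuous inverse of $\tilde{s}$. The only immaterial difference is the verification that this restriction inverts $\tilde{s}$: the paper deduces $\tilde{s}\hat{g}=r_T$ from uniqueness of extensions and then uses injectivity of $\tilde{s}$, whereas you check the identity directly on the free generators $\sigma_{\ast}(q(e))$, $e\in\gam\backslash T$, using that $\hat{g}$ annihilates tree edges.
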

\begin{proof}
It suffices to show the inverse of $\tilde{s}:F_G(\gam/T,\ast)\to \mathscr{F}(\gam)$ is continuous. Let $\sigma_{\ast}:\gam/T\to F_G(\gam/T,\ast)$ be the inclusion of generators and $q:\gam\to \gam/T$ be the quotient map. The composition $g=\sigma_{\ast}q:\gam\to F_G(\gam/T,\ast)$ may be viewed as a morphism of $\spaces$-graphs taking all vertices of $\gam$ to the unique vertex of $F_G(\gam/T,\ast)$. Since $F_G(\gam/T,\ast)$ is a $\spaces$-groupoid, there is a unique $\spaces$-functor $\hat{g}:\mathscr{F}(\gam)\to F_G(\gam/T,\ast)$ such that $\hat{g}\sigma=g$. If $i:\mathscr{F}(\gam)(v)\to \mathscr{F}(\gam)$ is the inclusion of the vertex group, the composition $\hat{g}i:\mathscr{F}(\gam)(v)\to F_G(\gam/T,\ast)$ is continuous. We check that $\hat{g}i$ is the inverse of $\tilde{s}$.

Recall that $sq=r_T\sigma$. Therefore \[\tilde{s}\hat{g}\sigma=\tilde{s}g=\tilde{s}\sigma_{\ast}q=sq=r_T\sigma.\]Uniqueness of extensions then gives $\tilde{s}\hat{g}=r_T$.
\[\xymatrix{
& \gam \ar[dl]_-{\sigma} \ar[dr]^-{q} \ar[d]^{g} & \\
\fgam \ar[r]^-{\hat{g}}  \ar@<.4ex>[dr]^-{r_T} & F_G(\gam/T,\ast) \ar[d]^-{\tilde{s}} & \gam/T \ar[l]_-{\sigma_{\ast}} \ar[dl]^-{s}\\
& \fgamv \ar@<.4ex>[ul]^-{i}
}\]
It is now clear that $\tilde{s}\hat{g}i=r_Ti=id$. Finally, since $\tilde{s}\hat{g}i\tilde{s}=r_Ti\tilde{s}=\tilde{s}$ and $\tilde{s}$ is injective, we have $\hat{g}i\tilde{s}=id$.
\end{proof}
\section{The fundamental $\spaces$-groupoid of a $\spaces$-graph}\label{sectionfundgroupoid}
\subsection{Path spaces and the fundamental $\spaces$-groupoid}

For a given space $X$, let $\mcp X$ be the space of paths $\ui\to X$ with the compact-open topology generated by subbasis sets $\langle C,W\rangle =\{\alpha|\alpha(C)\subseteq W\}$ where $C\subseteq \ui$ is compact and $W\subseteq X$ is open. For a closed subinterval $K\subseteq \ui$, let $L_K\colon \ui\to K$ be the unique, increasing, linear homeomorphism. If $\alpha:\ui\to X$ is a path, let $\alpha_{K}=\alpha|_{K}\circ L_{K}$ be the restricted path of $\alpha$ to $K$. If $K=\{t\}\subseteq \ui$, take $\alpha_{K}$ to be the constant path $c_{\alpha(t)}$ at $\alpha(t)$. The concatenation $\alpha=\alpha_1\cdot\alpha_2\cdot ...\cdot \alpha_n$ of paths $\alpha_j$ such that $\alpha_{j+1}(0)=\alpha_{j}(1)$ is given by letting $\alpha_{\left[\frac{j-1}{n},\frac{j}{n}\right]}=\alpha_j$.

Consider a basic open neighborhood $\mcu=\bigcap_{j=1}^{n}\langle C_j,U_j\rangle$ of a path $\alpha$ and any closed interval $K\subseteq\ui$. Then $\mcu_{K}=\bigcap_{K\cap C_j\neq \emptyset}\langle L_{K}^{-1}(K\cap C_j),U_j\rangle$ is an open neighborhood of $\alpha_{K}$. If $K=\{t\}$ is a singleton, let $\mcu_{K}=\langle \ui,\bigcap_{t\in C_j}U_j\rangle$. On the other hand, if $\beta_{K}=\alpha$, then $\mcu^{K}=\bigcap_{j=1}^{n}\langle L_{K}(C_j),U_j\rangle$ is an open neighborhood of $\beta$. If $K=\{t\}$ so that $\alpha=c_{\alpha(t)}$, let $\mcu^{K}=\bigcap_{j=1}^{n}\langle \{t\},U_j\rangle$.
\begin{lemma} Let $\mcu=\bigcap_{j=1}^{n}\langle C_j,U_j\rangle$ be an open neighborhood in $\mcp X$ such that $\bigcup_{j=1}^{n}C_j=\ui$. Then
\begin{enumerate}
\item For any closed interval $K\subseteq \ui$, $(\mcu^{K})_{K}=\mcu\subseteq (\mcu_{K})^{K}$,
\item If $0=t_0\leq t_1\leq t_2\leq ...\leq t_n=1$, then $\mcu= \bigcap_{j=1}^{n}( \mcu_{[t_{j-1},t_j]})^{[t_{j-1},j_i]}.$
\end{enumerate}
\end{lemma}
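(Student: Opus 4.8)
The plan is to reduce everything to the elementary observation that $L_K\colon\ui\to K$ is a homeomorphism, so that the operations $(-)_K$ and $(-)^K$ are, at the level of the defining subbasic sets, mutually inverse reparametrizations. Throughout I will use three facts about the subbasic sets: (i) $\langle C,U\rangle\subseteq\langle C',U\rangle$ whenever $C'\subseteq C$; (ii) $\langle A,U\rangle\cap\langle B,U\rangle=\langle A\cup B,U\rangle$; and (iii) a set $\langle C,U\rangle$ with $C=\emptyset$ imposes no constraint and may be freely inserted or deleted. I will also use that $L_K(C)\subseteq K$ for every $C$ and that $L_K^{-1}(L_K(C))=C$ since $L_K$ is a bijection.

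For the equality $(\mcu^K)_K=\mcu$ in (1), I would write $\mcu^K=\bigcap_{j=1}^n\langle L_K(C_j),U_j\rangle$ and apply the definition of $(-)_K$ to the compact sets $D_j=L_K(C_j)$. Since each $D_j\subseteq K$, we have $K\cap D_j=D_j$ and $D_j\neq\emptyset$ exactly when $C_j\neq\emptyset$; moreover $L_K^{-1}(K\cap D_j)=L_K^{-1}(L_K(C_j))=C_j$. Hence $(\mcu^K)_K=\bigcap_{C_j\neq\emptyset}\langle C_j,U_j\rangle$, which equals $\mcu$ by fact (iii). For the inclusion $\mcu\subseteq(\mcu_K)^K$, I would compute $(\mcu_K)^K=\bigcap_{K\cap C_j\neq\emptyset}\langle K\cap C_j,U_j\rangle$ (again using $L_K(L_K^{-1}(K\cap C_j))=K\cap C_j$), and then observe that any $\gamma\in\mcu$ satisfies $\gamma(K\cap C_j)\subseteq\gamma(C_j)\subseteq U_j$ by fact (i), since $K\cap C_j\subseteq C_j$. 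The inclusion is generally strict because $(\mcu_K)^K$ records only the portion of each constraint lying over $K$.

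For (2), writing $K_j=[t_{j-1},t_j]$, the computation in part (1) already gives $(\mcu_{K_j})^{K_j}=\bigcap_{k:\,K_j\cap C_k\neq\emptyset}\langle K_j\cap C_k,U_k\rangle$ (here I relabel the defining sets of $\mcu$ with index $k$ to keep them distinct from the partition index $j$). Taking the intersection over $j$ and reorganizing the resulting double intersection by $k$, I would collapse the inner intersection using fact (ii): for each fixed $k$, $\bigcap_{j:\,K_j\cap C_k\neq\emptyset}\langle K_j\cap C_k,U_k\rangle=\big\langle\bigcup_j(K_j\cap C_k),U_k\big\rangle$, and since $\bigcup_{j=1}^n K_j=\ui$ covers $C_k$, this union is exactly $C_k$. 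Thus $\bigcap_{j=1}^n(\mcu_{K_j})^{K_j}=\bigcap_k\langle C_k,U_k\rangle=\mcu$.

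The only genuine content is the union identity in (2) together with the bookkeeping that the two operations are inverse reparametrizations; I do not expect a real obstacle, since the covering hypothesis $\bigcup_j C_j=\ui$ enters (if at all) only to guarantee the constraints are global, whereas the decomposition in (2) relies instead on the partition covering $\ui=\bigcup_j K_j$. The main thing to be careful about is keeping the two families of indices — those defining $\mcu$ and those indexing the partition — notationally separate, and tracking which indices survive the nonemptiness condition $K_j\cap C_k\neq\emptyset$ at each step.
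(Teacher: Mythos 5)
Your proof is correct, and in fact the paper states this lemma without any proof at all (it is offered as routine bookkeeping about the operations $(-)_K$ and $(-)^K$ on presentations $\{(C_j,U_j)\}$), so there is no authorial argument to compare against; your verification --- $L_K$ is a bijection, so the two operations invert each other on the subbasic data, combined with the identity $\langle A,U\rangle\cap\langle B,U\rangle=\langle A\cup B,U\rangle$ and the fact that the partition satisfies $\bigcup_j[t_{j-1},t_j]=\ui$ --- is evidently the intended one, and your observation that the hypothesis $\bigcup_j C_j=\ui$ is not actually needed for either part is accurate.

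One small caveat: your computation uses injectivity of $L_K$, which fails when $K=\{t\}$ is degenerate, and the paper's definitions explicitly include separate conventions for $\mcu_K$ and $\mcu^K$ in that case. Under those conventions the equality $(\mcu^K)_K=\mcu$ in part (1) can genuinely fail: one computes $(\mcu^{\{t\}})_{\{t\}}=\langle\ui,\bigcap_j U_j\rangle$, which is contained in $\mcu$ but is a proper subset in general, so (1) should be read as asserting the equality only for nondegenerate closed intervals. Part (2) is unaffected even when some partition pieces $[t_{j-1},t_j]$ collapse to points, because under the singleton conventions one still gets $(\mcu_{\{t\}})^{\{t\}}=\langle\{t\},\bigcap_{t\in C_k}U_k\rangle=\bigcap_{\{t\}\cap C_k\neq\emptyset}\langle\{t\}\cap C_k,U_k\rangle$ (using $\langle\{t\},U\cap V\rangle=\langle\{t\},U\rangle\cap\langle\{t\},V\rangle$), which agrees verbatim with the formula your argument feeds into the reindexing step, so your proof of (2) goes through for arbitrary partitions with this one-line supplement.
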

In the case that $X$ is a $\spaces$-graph $|\gam|$, recall that vertex neighborhoods $B(x,r)$ and edge neighborhoods $U\times (a,b)$ (as in Remark \ref{basisgam}) form a basis $\mathscr{B}_{\gam}$ for the topology of $|\gam|$ which is closed under finite intersection. Thus sets of the form $\bigcap_{j=1}^{n}\left\langle \left[\frac{j-1}{n},\frac{j}{n}\right],U_j\right\rangle$, where $\{U_j\}\subset\mathscr{B}_{\gam}$, give basis generating the topology of $\mcp|\gam|$, which is convenient for our purposes. An open set of this form is said to be \textit{standard}.

The path space $\mcp|\gam|$ can be used to construct a $\spaces$-graph $\mpgam$ of paths: The object space of $\mpgam$ is the vertex set $\gamz$ and $\mpgam(x,y)$ is the subspace of $\mcp|\gam|$ consisting of paths from $x$ to $y$. Though concatenation of paths gives a continuous operation $\cdot:\mpgam(x,y)\times \mpgam(y,z)\to \mpgam(x,z)$, $\mpgam$ is not a $\spaces$-category because concatenation is not associative. One can obtain a $\spaces$-category by replacing paths with Moore paths, however, to remain consistent with \cite{Brazsemi}, we refrain from doing so.

We now recall the topologically enriched version of the usual fundamental groupoid used in \cite{Brazsemi} as applied to the case of $\spaces$-graphs.

\begin{definition}\label{deffundgroupoid}
\emph{
The \textit{fundamental} $\qtop$\textit{-groupoid} of a $\spaces$-graph $\gam$ is the $\qtop$-groupoid $\pi^{qtop}(\gam,\gamz)=\pi_0(\mpgam)$ whose object space is $\gamz$ and $\piqtop(\gam,\gamz)(x,y)$ is the path component space $\pi_0(\mpgam(x,y))$. The canonical quotient morphism is denoted $\pi:\mpgam\to \pi^{qtop}(\gam,\gamz)$. The \textit{fundamental} $\spaces$\textit{-groupoid} of $\gam$ is the $\tau$-reflection $\pit(\gam,\gamz)=\tau(\piqtop(\gam,\gamz))$.
}
\end{definition}
The underlying groupoid of $\pi^{qtop}(\gam,\gamz)$ and $\pit(\gam,\gamz)$ is the familiar fundamental groupoid $\pi(|\gam|,\gamz)$ with set of basepoints $|\gam|$ \cite{Brown06}.
\subsection{$\pit(\gam,\gamz)$ is a free $\spaces$-groupoid}
\begin{definition}
\emph{
A path $\alpha:\ui\to |\gam|$ is an \textit{edge path} if $\alpha^{-1}(\gam_0)=\{0,1\}$. An edge path $\alpha$ is \textit{trivial} if it is a null-homotopic loop. Equivalently, an edge path is non-trivial if and only if the endpoints are distinct or if it traverses a generalized wedge of circles $\Sigma(\gam(x,x)_+)\subset |\gam|$ at a vertex $x$. Let $\epgam$ be the wide sub-$\spaces$-graph of $\mpgam$ consisting of non-trivial edge paths.
}
\end{definition}
The following Lemma is a straightforward application of the existence of Lebesgue numbers.
\begin{lemma}\label{standardnbhdofedgepath}
If $\mcv$ is an open neighborhood of an edge path $\alpha$ in $\epgam(x,y)$, then there is a standard neighborhood $ \mathcal{A}=\bigcap_{j=1}^{n}\left\langle \left[\frac{j-1}{n},\frac{j}{n}\right],U_j\right\rangle$, $n>2$ of $\alpha$ in $\mcp|\gam|$ such that $\mathcal{A}\cap \epgam(x,y)\subseteq \mcv$ and such that $U_1,U_n$ are vertex neighborhoods and $U_2,...,U_{n-1}$ are edge neighborhoods.
\end{lemma}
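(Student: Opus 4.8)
The plan is to reduce to a standard basic neighborhood and then pass to a fine enough equal subdivision, run a Lebesgue-number argument on each piece, and assemble the neighborhoods so that the two outer subintervals are absorbed by vertex neighborhoods and the interior ones by edge neighborhoods. Since the standard open sets form a basis for $\mcp|\gam|$ and $\epgam(x,y)$ carries the subspace topology, I would write $\mcv=\mcw\cap\epgam(x,y)$ for some open $\mcw$ and shrink $\mcw$ to a standard neighborhood $\mcw=\bigcap_{i=1}^{m}\langle[\frac{i-1}{m},\frac{i}{m}],W_i\rangle$ of $\alpha$ with each $W_i\in\mathscr{B}_{\gam}$. It then suffices to produce $\mathcal{A}$ of the asserted form with $\alpha\in\mathcal{A}\subseteq\mcw$, since this forces $\mathcal{A}\cap\epgam(x,y)\subseteq\mcw\cap\epgam(x,y)=\mcv$.

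Before subdividing I would record two structural facts. First, because $\alpha(0)=x$ and $\alpha(1)=y$ lie in $W_1$ and $W_m$ while edge neighborhoods contain no vertices, $W_1$ and $W_m$ must be vertex neighborhoods $B(x,s)$ and $B(y,s')$. Second, since $\alpha^{-1}(\gam_0)=\{0,1\}$, the image under $\alpha$ of any subinterval $I\subseteq(0,1)$ is a connected subset of $|\gam|\setminus\gam_0=\coprod\gam(a,b)\times(0,1)$ and hence lies in a single edge-interior summand $\gam(a,b)\times(0,1)$; moreover at any such non-vertex point the edge neighborhoods form a neighborhood basis, so an edge neighborhood about a point of $\alpha(I)$ may be taken inside any prescribed open set containing that point.

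For the subdivision I would work one coarse interval at a time. For each $i$, cover the compact interval $[\frac{i-1}{m},\frac{i}{m}]$ by preimages $\alpha^{-1}(N)$ of basic neighborhoods $N\subseteq W_i$, taking $N$ to be a vertex neighborhood at the parameters $0$ and $1$ (relevant only for $i=1,m$) and an edge neighborhood at every other parameter; compactness yields a Lebesgue number $\lambda_i$. Setting $\lambda=\min_i\lambda_i$ and choosing $n$ to be a multiple of $m$ with $\tfrac1n<\lambda$ and $n>2$, each subinterval $I_j=[\frac{j-1}{n},\frac{j}{n}]$ lies in a single coarse interval, say with index $i(j)$, and maps into a single member $N$ of that interval's cover, so $\alpha(I_j)\subseteq N\subseteq W_{i(j)}$. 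The subintervals $I_1$ and $I_n$ contain $0$ and $1$, which lie only in the vertex-neighborhood preimages since edge neighborhoods miss vertices; thus I may take $U_1,U_n$ to be vertex neighborhoods. Every interior $I_j$ has $\alpha(I_j)$ disjoint from $\gam_0$: if its matched $N$ is an edge neighborhood I set $U_j=N$, and if $N$ is a vertex neighborhood $B(z,r)$ I replace it, using the second fact above, by an edge neighborhood $U_j\subseteq B(z,r)\setminus\{z\}$ containing the connected vertex-free set $\alpha(I_j)$. In every case $\alpha(I_j)\subseteq U_j\subseteq W_{i(j)}$, and since $n$ is a multiple of $m$ this gives $\mathcal{A}=\bigcap_{j=1}^{n}\langle[\frac{j-1}{n},\frac{j}{n}],U_j\rangle\subseteq\mcw$, as required.

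The main obstacle is the simultaneous bookkeeping: I must keep each $U_j$ inside the correct coarse neighborhood $W_{i(j)}$ (to force $\mathcal{A}\subseteq\mcw$) while respecting the prescribed vertex/edge pattern. Running the Lebesgue argument on each coarse interval against a cover whose members already lie in that interval's $W_i$ is what makes the containment $U_j\subseteq W_{i(j)}$ automatic once $n$ is a multiple of $m$. The two genuinely delicate points are the extreme coarse intervals, where the cover must blend a vertex neighborhood at the endpoint with edge neighborhoods on its interior so that precisely the boundary subinterval is captured by a vertex neighborhood, and an interior subinterval whose governing $W_{i(j)}$ is itself a vertex neighborhood $B(z,r)$, where $U_j$ must still be an honest edge neighborhood — which exists only because $\alpha(I_j)$ is connected, avoids vertices, and sits in one edge-interior product at heights bounded away from $0$ and $1$.
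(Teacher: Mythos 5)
Your proposal is correct and takes the same route the paper intends: the paper gives no written proof, remarking only that the lemma is ``a straightforward application of the existence of Lebesgue numbers,'' and your argument is exactly that application, correctly fleshed out. In particular, your reduction to a standard neighborhood $\mcw$ with $n$ a multiple of $m$ (so that $U_j\subseteq W_{i(j)}$ forces $\mathcal{A}\subseteq\mcw$), and your handling of the two delicate points --- forcing vertex neighborhoods exactly on $I_1,I_n$, and replacing a vertex neighborhood $B(z,r)$ by an edge neighborhood containing the compact, connected, vertex-free set $\alpha(I_j)$ --- are all sound.
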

Note for each edge $e\in \gam(x,y)$, there is a canonical non-trivial edge path $\alpha_e:\ui\to |\gam|$ where $\alpha_e(t)$ is the image of $(e,t)\in \gam(x,y)\times [0,1]$ in $|\gam|$.
\begin{lemma}\label{iso1}
There is a canonical embedding $\gamast\to \epgam$ of $\spaces$-graphs which induces an isomorphism $\pi_{0}(\scrc^{\pm}(\gam))=\pi_{0}(\scrc(\gamast))\to \pi_{0}(\scrc(\epgam))$ of $\qtop$-categories.
\end{lemma}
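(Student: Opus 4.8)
The plan is to realize the map of the lemma as a path-component isomorphism induced by a genuine continuous retraction of $\spaces$-graphs. I define $\iota\colon\gamast\to\epgam$ to be the identity on vertices and, on edges, to send $e\in\gam(x,y)$ to the straight edge path $\alpha_e$ and the formal inverse $e^{-1}\in\gam(y,x)^{-1}$ to the reversed path $\overline{\alpha_e}$. By the exponential law for the compact-open topology (using that $\ui$ is compact), the assignment $e\mapsto\alpha_e$ is continuous, so $\iota$ is a morphism of $\spaces$-graphs preserving the involutions $e\leftrightarrow e^{-1}$ and $\alpha\mapsto\overline{\alpha}$. Rather than invert $\iota$ directly, I will construct a continuous retraction $E\colon\epgam\to\gamast$ with $E\iota=\mathrm{id}$ and show that $\iota E(\alpha)$ lies in the same path component as $\alpha$ for every edge path $\alpha$. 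Together these make $\pi_0(\iota)$ and $\pi_0(E)$ mutually inverse homeomorphisms, hence an isomorphism $\pi_0(\gamast)\cong\pi_0(\epgam)$ of $\spaces$-graphs, and this is promoted to the stated $\qtop$-categorical statement by applying $\scrc$ and $\pi_0$.

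To build $E$, recall from the remark that $|\gam|\setminus\gam_0=\coprod_{(a,b)}\gam(a,b)\times(0,1)$ is a topological sum. Since an edge path $\alpha\in\epgam(x,y)$ meets $\gam_0$ only at its endpoints, the connected set $\alpha((0,1))$ lies in a single summand $\gam(a,b)\times(0,1)$ with $\{x,y\}\subseteq\{a,b\}$, and writing $\alpha|_{(0,1)}=(e(\cdot),\phi(\cdot))$ in coordinates, the evaluation $\alpha\mapsto p_1(\alpha(1/2))=e(1/2)$ is continuous and recovers an edge of $\gam$. For $x\neq y$ the summand itself records whether $\alpha$ runs forward (edge in $\gam(x,y)$) or backward (edge in $\gam(y,x)$), so I set $E(\alpha)=e(1/2)$ or $E(\alpha)=e(1/2)^{-1}$ accordingly. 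The delicate case, and the main obstacle, is a non-trivial loop ($x=y$), where both orientations traverse $\gam(x,x)$ and evaluation at $1/2$ cannot distinguish them. Here I detect orientation from the vertex neighborhood structure: $B(x,r)\setminus\{x\}$ is the disjoint union of the two prongs $\gam_x\times(0,r)$ and $\gam^x\times(1-r,1]$, and non-triviality forces $\alpha$ to leave $x$ through exactly one prong, since leaving and returning through the same prong would be null-homotopic. Using Lemma \ref{standardnbhdofedgepath} to control $\alpha$ near an endpoint on a vertex neighborhood, one checks the resulting orientation function $\epgam(x,x)\to\{\pm\}$ is locally constant, hence continuous; combining it with the evaluation yields a continuous $E$ on the loop hom-spaces as well. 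A direct check gives $E\iota=\mathrm{id}$, which incidentally shows $\iota$ is an embedding.

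It remains to see that $\iota E(\alpha)$ and $\alpha$ share a path component. Since $\alpha$ is non-trivial, $\phi(s)\to0$ at one endpoint and $\to1$ at the other in the appropriate orientation, so the homotopy $s\mapsto(e(s),(1-r)\phi(s)+rs)$ stays in $\epgam$ and linearizes the $\phi$-coordinate; a second homotopy $s\mapsto(e((1-r)s+r/2),s)$ then slides the edge coordinate within the path component of $e(1/2)$ to the straight path $\iota E(\alpha)$. As these are needed only pointwise, no global homotopy is required. Finally I apply $\scrc$ and $\pi_0$. On each hom-space, $\scrc(\iota)$ and $\scrc(E)$ act as the corresponding products over composable words, and $\scrc(E)\scrc(\iota)=\scrc(E\iota)=\mathrm{id}$. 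Because path components of a finite product are products of path components, the pointwise statement $\iota E(\alpha)\sim\alpha$ upgrades to $\pi_0\big(\scrc(\iota)\scrc(E)\big)=\mathrm{id}$ on each $\pi_0(\scrc(\epgam)(x,y))$. Thus $\pi_0(\scrc(\iota))$ is a continuous bijection with continuous inverse $\pi_0(\scrc(E))$, is involution-preserving and the identity on objects, and is therefore the asserted isomorphism of $\qtop$-categories.
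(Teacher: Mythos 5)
Your proposal is correct and takes essentially the same route as the paper: the identical embedding $e\mapsto\alpha_e$, $e^{-1}\mapsto\overline{\alpha_e}$, together with an evaluation-at-$\tfrac{1}{2}$ retraction $\epgam\to\gamast$ (the paper's $eval$) whose induced $\qtop$-functor on $\pi_{0}(\scrc(\cdot))$ is the inverse. Your explicit handling of the loop case $x=y$ --- reading off the orientation from the two prongs of a vertex neighborhood $B(x,r)$ and checking the orientation function is locally constant --- along with the two explicit homotopies, simply fills in details the paper leaves as ``straightforward to check.''
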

\begin{proof}
The embedding is the identity on objects and $e\mapsto \alpha_{e}$ for $e\in \gam(x,y)$ and $e^{-1}\mapsto \overline{\alpha_{e}}$ for $e^{-1}\in \gam(y,x)^{-1}$. Here $\overline{\beta}(t)=\beta(1-t)$ denotes the reverse path of $\beta$.

Define a $\spaces$-graph morphism $eval:\epgam\to \gamast$ as follows: It is the identity on vertices. If $\alpha\in \epgam(x,y)$, then $\alpha(1/2)$ is the image of a point $(g,s)\in \left(\gam(x,y)\sqcup \gam(y,x)\right)\times (0,1)$ in $|\gam|$. Take $eval(\alpha)=g$. It is straightforward to check that $eval$ induces the inverse $\qtop$-functor $\pi_{0}(\scrc(\epgam))\to \pi_{0}(\scrc(\gamast))$.
\end{proof}

Since concatenation $(\alpha,\beta)\mapsto \alpha\cdot \beta$ of paths is continuous, the inclusion $\epgam\to\mpgam$ gives rise to a $\spaces$-graph morphism $\scrc(\epgam)\to \mpgam$, $\alpha_1\alpha_2...\alpha_n\to \alpha_1\cdot \alpha_2\cdot ...\cdot\alpha_n$ on the free $\spaces$-category. Application of the path component space functor gives a $\qtop$-functor $\phi:\pi_0(\scrc(\epgam))\to \pi_0(\mpgam)=\piqtop(\gam,\gamz)$ to the fundamental $\qtop$-groupoid.
\begin{lemma} \label{quotient1}
The $\qtop$-functor $\phi:\pi_0(\scrc(\epgam))\to \piqtop(\gam,\gamz)$ is quotient.
\end{lemma}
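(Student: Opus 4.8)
The plan is to strip away the categorical formalities, reduce the statement to a single concrete fact about path spaces, and then prove that fact by constructing a continuous ``local edge approximation'' of paths by hand.

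First I would dispose of the formal reductions. Since $\phi$ is the identity on the discrete object space $\gamz$ and both total morphism spaces split as topological sums indexed by ordered pairs of objects, $\phi$ is quotient exactly when, for every $x,y\in\gamz$, the component map $\phi_{x,y}\colon\pi_0(\scrc(\epgam)(x,y))\to\pi_0(\mpgam(x,y))$ is a quotient map of spaces. Write $c\colon\scrc(\epgam)(x,y)\to\mpgam(x,y)$ for concatenation and $\pi_E,\pi_P$ for the two path-component quotient maps, so that $\phi_{x,y}\,\pi_E=\pi_P\,c$. Because $\pi_E$ is a quotient map and the target receives its topology from it, the elementary fact that $g$ is a quotient map whenever both $f$ and $g\,f$ are reduces the problem to showing that $\pi_P\,c$ is a quotient map. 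Now $\pi_P$ is itself quotient and $\pi_P\,c$ is surjective (every path from $x$ to $y$ is homotopic rel endpoints to an edge-path word), so showing $\pi_P\,c$ quotient amounts to the following \textbf{crux}: if $V\subseteq\mpgam(x,y)$ is saturated (a union of path components, i.e.\ of homotopy classes rel endpoints, written $\simeq$) and $c^{-1}(V)$ is open, then $V$ is open.

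To prove the crux, fix $\gamma\in V$; I must find an open neighborhood of $\gamma$ inside $V$. The key device is a continuous \emph{local edge approximation}: an open neighborhood $\mcw$ of $\gamma$ in $\mpgam(x,y)$ together with a continuous map $a\colon\mcw\to\scrc(\epgam)(x,y)$ such that $c(a(\delta))\simeq\delta$ for every $\delta\in\mcw$. Granting $a$, the crux follows at once: as $\gamma\in V$ and $V$ is saturated, $c(a(\gamma))\simeq\gamma$ lies in $V$, so $a(\gamma)\in c^{-1}(V)$; since $c^{-1}(V)$ is open and $a$ is continuous, $a^{-1}(c^{-1}(V))$ is an open neighborhood of $\gamma$, and any $\delta$ in it satisfies $\delta\simeq c(a(\delta))\in V$, hence $\delta\in V$ by saturation. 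Thus $a^{-1}(c^{-1}(V))\subseteq V$ is the desired neighborhood.

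It remains to construct $\mcw$ and $a$. Using a Lebesgue number for the open cover of $|\gam|$ by vertex and edge neighborhoods, I would subdivide $[0,1]$ into an alternating pattern of vertex segments, on which $\gamma$ lies in a single vertex neighborhood $B(v,r)$ (the first and last based at $x$ and $y$), and edge segments, on each of which $\gamma$ crosses the middle of a single edge inside one edge neighborhood $W_i\times(a_i,b_i)\subseteq\gam(p_i,q_i)\times(0,1)$; Lemma \ref{standardnbhdofedgepath} and the preceding decomposition lemma guarantee that the associated standard neighborhood $\mcu$ has this shape and that $\mcw=\mcu\cap\mpgam(x,y)$ is open. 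For $\delta\in\mcw$, I read off on the $i$-th edge segment the edge $e_i(\delta)\in W_i\subseteq\gam(p_i,q_i)$ as the edge coordinate of $\delta$ evaluated at the segment's midpoint, and take the $i$-th letter of $a(\delta)$ to be $\alpha_{e_i(\delta)}$ or its reverse $\overline{\alpha_{e_i(\delta)}}$ according to the crossing direction fixed by $\gamma$ on $\mcw$. Because consecutive crossings share a flanking vertex, these letters are composable and $a(\delta)$ lands in a single summand $\prod_i\epgam(v_i,v_{i+1})$ of $\scrc(\epgam)(x,y)$; continuity of $a$ then follows coordinatewise from continuity of evaluation in the compact-open topology together with the fact that $e\mapsto\alpha_e$ is the continuous embedding of Lemma \ref{iso1}.

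Finally, for each $\delta\in\mcw$ one verifies $\delta\simeq c(a(\delta))$ by the standard edge-path approximation argument: the excursions of $\delta$ within a vertex neighborhood are contracted away using that $B(v,r)$ is contractible onto $v$ (Remark \ref{basisgam}), while each edge segment is straightened to the canonical edge path within the cone/suspension structure of the edge, the initial and terminal segments keeping the endpoints pinned at $x$ and $y$. I expect this last step to be the main obstacle: one must choose the alternating subdivision so that every crossing is isolated and consistently oriented (so that $a$ is single-valued into one summand with composable letters) and then check simultaneously that $a$ is continuous and that $c(a(\delta))$ stays in the homotopy class of $\delta$. The two reductions above are purely formal; all of the geometric content is concentrated in this local approximation.
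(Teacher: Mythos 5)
Your proposal is correct in substance and shares the paper's geometric core---Lebesgue subdivision into alternating vertex and edge segments, the standard neighborhoods of Lemma \ref{standardnbhdofedgepath}, and absorption of bad behavior (including nearby paths that miss an intermediate vertex when $x_i=x_{i+2}$) into contractible vertex neighborhoods via canonical-arc replacement---but your formal mechanism is genuinely different. The paper works with the decomposition morphism $\mcd:\mpgam\to\scrc(\epgam)$, which it explicitly notes is \emph{not} continuous, and it never needs any continuous assignment of words to paths: openness of $\pi^{-1}(U)$ is transferred through a fixed product neighborhood $V_1V_2\cdots V_n$ of $\mcd(\alpha)$ in $\scrc(\epgam)(x,y)$, and for each nearby $\beta$ one checks only \emph{membership} $\mcd(\gamma)\in\mcu_1\cdots\mcu_n\subseteq V_1\cdots V_n$ for the straightened path $\gamma\simeq\beta$. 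You instead build an honestly continuous local approximation $a$ by sampling the edge coordinate at fixed segment midpoints and using canonical letters $\alpha_{e_i(\delta)}^{\pm 1}$; this makes the quotient argument purely formal (a continuous local section of $\pi_P\,c$ up to homotopy, played against saturated sets), which is a cleaner reduction, but it costs one extra homotopy fact that the paper's proof never invokes: since your letters are not restrictions of $\delta$, the verification $c(a(\delta))\simeq\delta$ requires knowing that a single crossing of an edge region is homotopic rel endpoints to the canonical edge path $\alpha_e$ for \emph{any} edge $e$ in the path component traversed (true because the suspension over a path component is simply connected; equivalently, it is the algebraic fact underlying why $\pi_0(\gam)$, not $\gam$, generates the fundamental groupoid freely). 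Two details you flag or elide should be made explicit: the single-summand and orientation claims for $a$ are forced by the height windows of the standard neighborhoods, which compel every $\delta\in\mcw$ to cross each segment in the direction fixed by $\gamma$ (this matters for loop edges in $\gam(x,x)$, where endpoints alone do not determine direction); and the degenerate case where the center path admits no nontrivial edge-path restriction (word $id_x$) needs the separate, easy neighborhood $\langle\ui,B(x,r)\rangle$ of null-homotopic loops, exactly as in the paper's first case.
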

\begin{proof}
Consider the following factorization of $\pi$:
\[\xymatrix{
\scrc(\epgam) \ar[d]_-{q}  & \mpgam \ar[l]_-{\mcd} \ar[d]^-{\pi} \\
\pi_0(\scrc(\epgam)) \ar[r]_-{\phi} & \piqtop(|\gam|,\gamz)
}\]where $\mcd$ is the decomposition morphism defined as follows. If $\alpha\in \mpgam(x,y)$, there are finitely many intervals $[a_1,b_1],...,[a_n,b_n]\subset \ui$ (ordered with respect to the ordering of $\ui$) such that $\alpha_i=\alpha_{[a_i,b_i]}$ is a non-trivial edge path. Now $\mcd(\alpha)$ is defined as the word $\alpha_1\alpha_2...\alpha_n$ in $\scrc(\epgam)$. If no restriction of $\alpha$ is a non-trivial edge path, then $\alpha$ is a null-homotopic loop based at some vertex $x\in \gamz$ and so we take $\mcd(\alpha)$ to be the identity $id_{x}$. Since $\alpha$ and $\alpha_1\cdot \alpha_2\cdot \dots\cdot \alpha_n$ are homotopic paths, the diagram commutes. The decomposition morphism is a direct generalization of the decomposition function in \cite[pp. 793]{Brazfretopgrp}; it is important to note that $\mcd$ is only a morphism of underlying algebraic graphs since it is not continuous on edge spaces.

For convenience, rename the sets $[0,a_1]$, $[b_1,a_{2}]$,..., $[b_{n-1},a_{n}]$, $[b_n,1]$ (some of which may be singletons) as $K_1,K_2,...,K_n,K_{n+1}$. Note that each restriction $\alpha_{K_i}$ is a trivial loop based at a vertex $x_i$ which has image in some vertex neighborhood $B(x_i,r_i)$. In particular, $x=x_1$ and $x_n=y$.

Given vertices $x,y\in \gamz$, suppose $U\subseteq \pi_{1}^{qtop}(\gam,\gamz)(x,y)$ such that $\phi^{-1}(U)$ is open in $\pi_0(\scrc(\epgam))(x,y)$. Thus $q^{-1}(\phi^{-1}(U))$ is open in $\scrc(\epgam)(x,y)$. Since $\pi$ is quotient, it suffices to show $\pi^{-1}(U)=\mcd^{-1}(q^{-1}(\phi^{-1}(U)))$ is open in $\mpgam(x,y)$. Suppose $\alpha\in \pi^{-1}(U)$ and note $\mcd(\alpha)$ lies in the open neighborhood $q^{-1}(\phi^{-1}(U))$. If $\mcd(\alpha)=id_x$ for some $x$, then the image of $\alpha$ lies in the contractible vertex neighborhood $B(x,1)$. The neighborhood $\{\beta\in \mpgam(x,x)|Im(\beta)\subseteq B(x,1)\}$ of $\alpha$ contains only null-homotopic loops and is therefore contained in $\pi^{-1}(U)$.

On the other hand, suppose $\mcd(\alpha)=\alpha_1....\alpha_n\in q^{-1}(\phi^{-1}(U))$ with decomposition as describe above. In particular, $\alpha_i\in \epgam(x_i,x_{i+1})$. We construct a neighborhood of $\alpha$ contained in $\pi^{-1}(U)$. Recall that $\mathcal{B}_i=\langle \ui,B(x_i,r_i)\rangle$ is a neighborhood of $\alpha_{K_i}$, and thus $\mathcal{B}_{i}^{K_i}$ is a neighborhood of $\alpha$ for each $i$. Since $\scrc(\epgam)$ is a $\spaces$-category, there are open neighborhoods $V_i$ of $\alpha_i$ in $\epgam(x_i,x_{i+1})$ such that the product $V_1V_2...V_n$ is contained in $q^{-1}(\phi^{-1}(U))$. By Lemma \ref{standardnbhdofedgepath}, there is a standard neighborhood $\mathcal{A}_i=\bigcap_{j=1}^{n_i}\left\langle \left[\frac{j-1}{n_i},\frac{j}{n_i}\right],U^{i}_{j}\right\rangle$, $n_i>2$ of $\alpha_i$ in $\mcp|\gam|$ such that $\mcu_i=\mathcal{A}_i\cap \epgam(x_i,x_{i+1})\subseteq V_i$ and such that $U^{i}_{1},U_{n_i}^{i}$ are vertex neighborhoods and $U^{i}_{2},...,U^{i}_{n-1}$ are edge neighborhoods. In particular, choose the $\mathcal{A}_i$ so that $U^{i}_{n_i}\cup U_{1}^{i+1}\subseteq B(x_{i+1},r_{i+1})$.

Now \[\mathcal{W}=\bigcap_{i=1}^{n}\mathcal{A}_{i}^{[a_i,b_i]}\cap \bigcap_{i=1}^{n+1}\mathcal{B}_{i}^{K_i}\cap \mpgam(x,y)\]is an open neighborhood of $\alpha$ in $\mpgam(x,y)$.

Suppose $\beta\in \mathcal{W}$. We clearly have $\beta(K_{i+1})\subset B(x_{i+1},r_i)$, however if $x_{i}=x_{i+2}$, it is possible that $\beta_{[a_i,b_{i+1}]}$ does not hit the vertex $x_{i+1}$. To deal with this possibility, we replace ``small" portions of $\beta$. For each $i=2,...,n$, let $s_i=L_{[a_{i-1},b_{i-1}]}\left(1-\frac{1}{n_{i-1}}\right)$ and $t_i=L_{[a_i,b_i]}\left(\frac{1}{n_i}\right)$ so that $K_i=[b_{i-1},a_i]\subset [s_i,t_i]$. Now define a path $\gamma$ to equal the path $\beta$ with the following exceptions: replace the portion of $\beta$ from $s_i$ to $b_{i-1}$ with the canonical arc from $\beta(s_i)$ to $x_i$, take $\gamma$ to be the constant at $x_i$ on $[b_{i-1},a_i]$, and replace the portion of $\beta$ from $a_i$ to $t_i$ with the canonical arc from $x_i$ to $\beta(t_i)$. Since $\gamma$ is given by changing $\beta$ only in contractible neighborhoods, $\gamma$ and $\beta$ are homotopic paths, i.e. $\pi(\beta)=\pi(\gamma)$. Moreover, $\gamma_i=\gamma_{[a_i,b_i]}$ is an edge path for each $i$ contained in $\mcu_{i}$. Thus \[\mcd(\gamma)=\gamma_1\gamma_2...\gamma_n\in \mcu_1\mcu_2...\mcu_n\subseteq V_1V_2...V_n\subseteq  q^{-1}(\phi^{-1}(U)).\]Finally, we see that \[\pi(\beta)=\pi(\gamma)=\phi(q(\mcd(\gamma)))\in U\] giving the inclusion $\mathcal{W}\subseteq \pi^{-1}(U)$.
\end{proof}
\begin{theorem}\label{computation} The fundamental $\spaces$-groupoid $\pit(\gam,\gamz)$ is naturally isomorphic to the free $\spaces$-groupoid $\mathscr{F}(\pi_0(\gam))$.
\end{theorem}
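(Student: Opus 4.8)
The plan is to prove the stronger statement that $\pit(\gam,\gamz)$, equipped with a canonical comparison morphism $j\colon \pi_0(\gam)\to \pit(\gam,\gamz)$, satisfies the universal property of the free $\spaces$-groupoid on $\pi_0(\gam)$; since $\mathscr{F}(\pi_0(\gam))=\tau(\mathscr{F}_{R}(\pi_0(\gam)))$ satisfies that property by definition (see \ref{freetopgroupoids}), the two will be naturally isomorphic. First I would construct $j$: the continuous $\spaces$-graph morphism $\gam\to \mpgam$, $e\mapsto \alpha_e$, followed by $\pi\colon \mpgam\to\piqtop(\gam,\gamz)$ and the reflection unit $\piqtop(\gam,\gamz)\to\pit(\gam,\gamz)$ is constant on the path components of each edge space (edges in one component of $\gam(x,y)$ yield homotopic edge paths), hence factors through $\gam\to\pi_0(\gam)$ to give a continuous $j$.

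For the universal property, fix a $\spaces$-groupoid $\mcg$ and a $\spaces$-graph morphism $f\colon\pi_0(\gam)\to\mcg$; I must produce a unique $\spaces$-functor $\bar f\colon\pit(\gam,\gamz)\to\mcg$ with $\bar f\circ j=f$. The idea is to work one level down, on the free objects, and then descend. Extending $f$ by freeness of $\scrc^{\pm}$ and composing with the canonical functor $\psi$ of Lemma \ref{canonicalfunctor} yields a $\qtop$-functor $G\colon\pi_0(\scrc^{\pm}(\gam))\to\mcg$. By Lemmas \ref{iso1} and \ref{quotient1} the composite $\Phi\colon\pi_0(\scrc^{\pm}(\gam))\xrightarrow{\cong}\pi_0(\scrc(\epgam))\xrightarrow{\phi}\piqtop(\gam,\gamz)$ is a quotient $\qtop$-functor, so it suffices to check that $G$ is constant on the fibres of $\Phi$. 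Since the underlying groupoid of $\piqtop(\gam,\gamz)$ is the fundamental groupoid $\pi(|\gam|,\gamz)$, which is the free groupoid on the algebraic graph $\pi_0(\gam)$, these fibres are precisely the free reduction classes of words, and $G$ respects free reduction because $\mcg$ is a groupoid. Thus $G$ descends to a $\qtop$-functor $F\colon\piqtop(\gam,\gamz)\to\mcg$, which corresponds, via the adjunction of Lemma \ref{reflection}, to a unique $\spaces$-functor $\bar f=\tau(F)$ out of $\pit(\gam,\gamz)=\tau(\piqtop(\gam,\gamz))$. The identity $\bar f\circ j=f$ then reduces to an evaluation on the generators $[e]\in\pi_0(\gam)$, and uniqueness follows because $j(\pi_0(\gam))$ generates the groupoid.

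The main obstacle, and the reason the argument must be routed through the reflection $\tau$ rather than exhibited as a direct isomorphism of $\qtop$-groupoids, is that $\psi$ is only a continuous bijection: $\pi_0$ does not preserve finite products, so the topologies on $\pi_0(\scrc^{\pm}(\gam))$ and $\scrc^{\pm}(\pi_0(\gam))$ differ and $\psi^{-1}$ need not be continuous. My plan never inverts $\psi$; it uses $\psi$ only in its continuous direction and transports continuity forward along the quotient $\Phi$, letting $\tau$ absorb the product discrepancy. Finally, naturality is automatic: $j$ and every map used to build $F$ are canonical in $\gam$, so the assignments $\gam\mapsto\pit(\gam,\gamz)$ and $\gam\mapsto\mathscr{F}(\pi_0(\gam))$ corepresent the same set-valued functor of $\mcg$, namely the $\spaces$-graph morphisms $\pi_0(\gam)\to\mcg$ on $\mathbf{TopGrpd}$, and the induced isomorphism is natural.
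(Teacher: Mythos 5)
Your proposal is correct and follows essentially the same route as the paper: the same composite $\pi_0(\scrc^{\pm}(\gam))\cong\pi_0(\scrc(\epgam))\xrightarrow{\phi}\piqtop(\gam,\gamz)$ with $\phi$ quotient (Lemmas \ref{iso1}, \ref{canonicalfunctor}, \ref{quotient1}), the same appeal to the algebraic fact that the underlying groupoid $\pi(|\gam|,\gamz)$ is free on $\pi_0(\gam)$, and the same finish via the $\tau$-reflection of Lemma \ref{reflection}. The only difference is packaging: you verify the universal property against an arbitrary $\spaces$-groupoid $\mcg$, whereas the paper runs the identical quotient-descent continuity argument once, in the special case $\mcg=\mathscr{F}_{R}(\pi_0(\gam))$, to show that the inverse of the canonical functor $\hat{\sigma}$ is continuous.
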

\begin{proof}
The embedding $\gam\to\mpgam$ given by $e\mapsto \alpha_e$ induces a $\spaces$-graph morphism $\pi_0(\gam)\to \pi_0(\mpgam)=\piqtop(\gam,\gamz)$. Additionally, the identity functor $\piqtop(\gam,\gamz)\to \pit(\gam,\gamz)$ is a morphism of $\qtop$-groupoids. The composition $\sigma:\pi_0(\gam)\to  \pit(\gam,\gamz)$ of these two morphisms is a morphism of $\spaces$-graphs which induces a morphism $\hat{\sigma}:\mathscr{F}(\pi_0(\gam))\to \pit(\gam,\gamz)$ of $\spaces$-groupoids. A straightforward generalization of \cite[3.14]{Braztopgrp} to $\spaces$-graphs with more than one vertex gives that $\hat{\sigma}$ is an isomorphism of the underlying groupoids. Therefore, it suffices to check the inverse $\hat{\sigma}^{-1}:\pit(\gam,\gamz)\to \mathscr{F}(\pi_0(\gam))$ is a $\spaces$-functor.

Consider the following commutative diagram. The upper horizontal functors are the $\qtop$-isomorphism from Lemma \ref{iso1} and the canonical $\qtop$-functor $\psi:\pi_0(\scrc^{\pm}(\gam))\to \scrc^{\pm}(\pi_0(\gam))$ from Lemma \ref{canonicalfunctor}. The vertical functor $R$ is the quotient $\qtop$-functor given by word reduction (See Remark \ref{freetopgroupoids}) and $\phi$ is the quotient $\qtop$ functor of Lemma \ref{quotient1}.
\[
\xymatrix{
\pi_0(\scrc(\epgam)) \ar[d]_-{\phi} \ar[r]^-{\cong} & \pi_0(\scrc^{\pm}(\gam))  \ar[r]^-{\psi} & \scrc^{\pm}(\pi_0(\gam)) \ar[d]^-{R}\\
\pi_{1}^{qtop}(\gam,\gamz) \ar[rr]_-{\hat{\sigma}^{-1}} & & \mathscr{F}_{R}(\pi_0(\gam))
}
\]
Since the top composition is a $\qtop$-functor and $\phi$ is quotient, $\hat{\sigma}^{-1}:\piqtop(\gam,\gamz)\to \mathscr{F}_{R}(\pi_0(\gam))$ is continuous on hom-spaces (by the universal property of quotient spaces) and is therefore a $\qtop$-functor. Applying the $\tau$-reflection gives that \[\hat{\sigma}^{-1}:\pit(\gam,\gamz)= \tau(\piqtop(\gam,\gamz))\to\tau(\mathscr{F}_{R}(\pi_0(\gam)))=\mathscr{F}(\pi_0(\gam))\] is a $\spaces$-functor.
\end{proof}

The topological group $\pit(\gam,\gamz)(v)$ at a vertex $v\in \gam$ is, by definition, the \textit{topological fundamental group} $\pi^{\tau}_{1}(|\gam|,v)$ of \cite{Braztopgrp,Brazsemi}. In light of Section \ref{vertexgroups}, we have the following Corollary.
\begin{corollary}\label{fundgroupoftopgraph}
The topological fundamental group $\pi^{\tau}_{1}(|\gam|,v)$ of a $\spaces$-graph $\gam$ is a free Graev topological group. In particular, if $\gam$ has more than one vertex and $T\subset \pi_0(\gam)$ is a maximal tree, then $\pi^{\tau}_{1}(|\gam|,v)\cong F_G(\pi_0(\gam)/T,\ast)$. If $\gam$ has a single vertex, then $\pi^{\tau}_{1}(|\gam|,v)\cong F_G(\pi_0(\gam)_+,\ast)\cong F_M(\pi_0(\gam))$.
\end{corollary}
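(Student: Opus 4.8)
The plan is to obtain the Corollary by evaluating the natural isomorphism of Theorem \ref{computation} at the single vertex $v$ and then invoking Theorem \ref{vertexgroup}. By definition, $\pi^{\tau}_{1}(|\gam|,v)$ is the vertex group $\pit(\gam,\gamz)(v)=\pit(\gam,\gamz)(v,v)$. Theorem \ref{computation} supplies a natural isomorphism $\pit(\gam,\gamz)\cong \mathscr{F}(\pi_0(\gam))$ of $\spaces$-groupoids which is the identity on the common object space $\gamz$. Since an isomorphism in $\mathbf{TopGrpd}$ is a $\spaces$-functor whose inverse is also a $\spaces$-functor, it restricts on the hom-space at $(v,v)$ to a continuous group homomorphism with continuous inverse, that is, to an isomorphism of topological groups. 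Hence $\pi^{\tau}_{1}(|\gam|,v)\cong \mathscr{F}(\pi_0(\gam))(v)$ as topological groups, and the problem reduces to identifying the vertex group of the free $\spaces$-groupoid $\mathscr{F}(\pi_0(\gam))$.

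First I would record that $\gam$ and $\pi_0(\gam)$ share the vertex space $\gamz$, so $\gam$ has more than one vertex (resp. a single vertex) exactly when $\pi_0(\gam)$ does; moreover $\pi_0(\gam)(x,y)=\pi_0(\gam(x,y))$ is nonempty if and only if $\gam(x,y)$ is, so $\pi_0(\gam)$ is connected whenever $\gam$ is and therefore contains a maximal tree. In the multi-vertex case I would fix such a maximal tree $T\subset \pi_0(\gam)$ and apply Theorem \ref{vertexgroup} directly to the $\spaces$-graph $\pi_0(\gam)$, obtaining $\mathscr{F}(\pi_0(\gam))(v)\cong F_G(\pi_0(\gam)/T,\ast)$. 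Composing with the isomorphism of the previous paragraph yields $\pi^{\tau}_{1}(|\gam|,v)\cong F_G(\pi_0(\gam)/T,\ast)$.

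For the single-vertex case I would instead use the description of the free $\spaces$-groupoid recalled in \ref{freetopgroupoids}: when a $\spaces$-graph has one vertex, the free $\spaces$-groupoid coincides with its unique vertex group and is the corresponding free topological group, so $\mathscr{F}(\pi_0(\gam))=\mathscr{F}(\pi_0(\gam))(v)\cong F_M(\pi_0(\gam))\cong F_G(\pi_0(\gam)_+,\ast)$. Combined with $\pi^{\tau}_{1}(|\gam|,v)\cong \mathscr{F}(\pi_0(\gam))(v)$, this gives the stated chain of isomorphisms.

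Since every step is an application of an already-established result, I do not expect a substantive obstacle. The one point deserving care is the first paragraph's claim that a $\spaces$-groupoid isomorphism restricts to a \emph{topological}-group isomorphism on vertex groups; this is where the fact that both the functor and its inverse are continuous on hom-spaces is used, and it is precisely what upgrades the underlying algebraic isomorphism of vertex groups to an isomorphism of free Graev topological groups.
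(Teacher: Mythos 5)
Your proposal is correct and matches the paper's own (implicit) argument: the paper deduces the Corollary exactly by combining the isomorphism $\pit(\gam,\gamz)\cong \mathscr{F}(\pi_0(\gam))$ of Theorem \ref{computation} with Theorem \ref{vertexgroup} in the multi-vertex case and with the single-vertex identification $\mathscr{F}(\gam)\cong F_M(\gam)\cong F_G(\gam_+,\ast)$ recalled in \ref{freetopgroupoids} and Section \ref{vertexgroups}. Your extra care about the isomorphism restricting to a topological-group isomorphism on vertex groups and about $\pi_0(\gam)$ inheriting connectedness (hence possessing a maximal tree) simply makes explicit what the paper leaves tacit.
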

\begin{corollary}
Every free $\spaces$-groupoid is the fundamental $\spaces$-groupoid of a $\spaces$-graph.
\end{corollary}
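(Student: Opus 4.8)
The plan is to read this statement as the formal converse of Theorem~\ref{computation}: that theorem realizes every groupoid of the form $\mathscr{F}(\pi_0(\gam))$ as a fundamental $\spaces$-groupoid, and what we now want is that \emph{every} free $\spaces$-groupoid arises this way. By definition a free $\spaces$-groupoid $\mcg$ is isomorphic to $\mathscr{F}(\Delta)$ for some $\spaces$-graph $\Delta$, so it suffices to produce a single $\spaces$-graph $\gam$ for which $\pit(\gam,\gamz)\cong\mathscr{F}(\Delta)$. The key idea is to arrange $\pi_0(\gam)\cong\Delta$, and the section to the path-component functor recorded in Remark~\ref{everygraphisapcgraph} does exactly this.

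Concretely, I would set $\gam=h(\Delta)$, the $\spaces$-graph obtained by applying the realization $h$ of Remark~\ref{everygraphisapcgraph} hom-space by hom-space, so that there is a natural isomorphism $\pi_0(\gam)=\pi_0(h(\Delta))\cong\Delta$ of $\spaces$-graphs. Applying Theorem~\ref{computation} to this $\gam$ yields a natural isomorphism $\pit(\gam,\gamz)\cong\mathscr{F}(\pi_0(\gam))$, and it then remains only to chain the isomorphisms
\[
\pit(\gam,\gamz)\;\cong\;\mathscr{F}(\pi_0(\gam))\;\cong\;\mathscr{F}(\Delta)\;\cong\;\mcg,
\]
where the middle arrow is the image under the functor $\mathscr{F}\colon\topgraph\to\mathbf{TopGrpd}$ of the $\spaces$-graph isomorphism $\pi_0(\gam)\cong\Delta$. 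Since any functor carries isomorphisms to isomorphisms, this middle step is automatic.

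I do not expect a genuine obstacle here, since the entire content has already been packaged into Theorem~\ref{computation} and Remark~\ref{everygraphisapcgraph}; the argument is a short corollary-style assembly. The only points deserving a word of care are that one must begin from a presentation $\mcg\cong\mathscr{F}(\Delta)$ of the free $\spaces$-groupoid by a generating $\spaces$-graph, which is precisely what ``free'' means, and that $\mathscr{F}$ respects the graph isomorphism $\pi_0(h(\Delta))\cong\Delta$, which follows from functoriality. Both are immediate, so the statement follows.
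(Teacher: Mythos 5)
Your proposal is correct and follows exactly the paper's argument: both take the given free $\spaces$-groupoid $\mathscr{F}(\Delta)$, apply the section $h$ from Remark \ref{everygraphisapcgraph} to get a $\spaces$-graph with $\pi_0(h(\Delta))\cong\Delta$, and then invoke Theorem \ref{computation} to conclude $\pit(h(\Delta),h(\Delta)_0)\cong\mathscr{F}(\pi_0(h(\Delta)))\cong\mathscr{F}(\Delta)$. Your extra remarks about functoriality of $\mathscr{F}$ are fine but make explicit what the paper leaves implicit; there is no substantive difference.
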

\begin{proof}
According to Remark \ref{everygraphisapcgraph}, a given $\spaces$-graph $\gam$ is isomorphic to $\pi_0(h(\gam))$ for some $\spaces$-graph $h(\gam)$. By Theorem \ref{computation}, $\pit(h(\gam),h(\gam)_0)\cong \mathscr{F}(\pi_0(h(\gam)))\cong \mathscr{F}(\gam)$.
\end{proof}
\section{Semicoverings and a proof of Theorem \ref{nielsonshreier}}\label{sectionsemicovering}
We recall the theory of semicovering spaces and apply it to $\spaces$-graphs. Our definitions and notation are those used in \cite{Brazsemi}. Given a space $X$ and point $x\in X$, let $(\mcp X)_x$ and $(\Phi X)_x$ be spaces of paths and homotopies (rel. endpoints) of paths starting at $x$ respectively with the compact-open topology. Let $\mcp X(x,x')$ be the subspace of $(\mcp X)_x$ consisting of paths ending at $x'$. In particular, $\mcp X(x,x)=\Omega(X,x)$ is the space of based loops.
\begin{definition}
\emph{
A \textit{semicovering} is a local homeomorphism $\pyx$ such that for each $y\in Y$, the induced maps $\mcp p:(\mcp Y)_y\to (\mcp X)_{p(x)}$ and $\Phi p:(\Phi Y)_y\to (\Phi X)_{p(y)}$ are homeomorphisms. The space $Y$ is a \textit{semicovering (space)} of $X$. If $\alpha$ is a path or homotopy of paths starting at $p(y)$, then $\tilde{\alpha}_{y}$ denotes the unique lift of $\alpha$ starting at $y\in Y$.
}
\end{definition}
Every covering (in the classical sense) is a semicovering, however, if $X$ does not have a simply connected cover (e.g. the Hawaiian earring), a semicovering of $X$ need not be a covering. Of particular importance to the current paper is the fact that a semicovering $\pyx$ induces an open covering morphism $\pit p:\pit Y\to \pit X$. In particular, for each $y_1,y_2\in Y$, $p(y_i)=x_i$, the induced map $p_{\ast}:\pi_{1}^{\tau}Y(y_1,y_2)\to \pit X(x_1,x_2)$ is an open embedding of spaces (of topological groups when $y_1=y_2$). Just as in classical covering theory, if $\beta$ is a loop based at $p(y)$, then $\tilde{\beta}_{y}$ is a loop based at $y$ if and only if $[\beta]$ lies in the image of the embedding $p_{\ast}$. Thus, since $\pi:\mcp X(x,x')\to \pit X(x_1,x_2)$ is continuous, $\{\beta|\tilde{\beta}_{y_1}(1)=y_2\}$ is an open subspace of $\mcp X(x_1,x_2)$.

Since $\spaces$-graphs can fail to be locally path connected, we cannot apply classical covering theory to study them. We use the fact that semicovering theory applies to certain non-locally path connected spaces called locally wep-connected spaces \cite[Definition 6.4]{Brazsemi}.

\begin{definition} \label{localpathconn} \emph{Let $X$ be a space.
\begin{enumerate}
\item A path $\alpha: \ui\to X$ is \textit{well-targeted} if for every open neighborhood $\mcu$ of $\alpha$ in $(\mcp X)_{\alpha(0)}$ there is an open neighborhood $V_1$ of $\alpha(1)$ such that for each $b\in V_1$, there is a path $\beta\in \mcu$ with $\beta(1)=y$.
\item A path $\alpha:\ui\to X$ is \textit{locally well-targeted} if for every open neighborhood $\mcu$ of $\alpha$ in $(\mcp X)_{\alpha(0)}$ there is an open neighborhood $V_1$ of $\alpha(1)$ such that for each $b\in V_1$, there is a well-targeted path $\beta\in \mcu$ with $\beta(1)=y$.
\end{enumerate}
See also the closely related definition of \textit{(locally) well-ended path} in \cite{Brazsemi}. A space $X$ is \textit{locally wep-connected} if for every pair of points $x,y\in X$, there is a locally well-targeted path from $x$ to $y$.}
\end{definition}
Every locally path connected space is locally wep-connected. There are also many spaces which are locally wep-connected but not locally path connected. For example, if $\gam$ is a $\spaces$-graph with a single vertex, the generalized wedge of circles $|\gam|=\Sigma(\gam_+)$ is locally wep-connected \cite[Proposition 6.7]{Brazsemi} but is only locally path connected if $\gam$ is locally path connected. The following Lemma generalizes this special case to arbitrary $\spaces$-graphs; the proof is nearly identical.
\begin{lemma}\label{lwepconnected}
Every $\spaces$-graph is locally wep-connected.
\end{lemma}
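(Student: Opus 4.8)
The plan is to follow the single-vertex argument of \cite[Proposition 6.7]{Brazsemi} almost verbatim, exhibiting for each pair of points an explicit \emph{canonical edge path} and showing it is well-targeted (which is stronger than, and in particular implies, locally well-targeted). Fix $x,y\in|\gam|$. Using the edge-path characterization of connectedness, I first construct $\alpha$ by running from $x$ to an incident vertex (if $x$ lies in an edge interior), concatenating canonical edge arcs $\alpha_{e_i}$ and their reverses along a vertex chain, and finishing with a terminal arc along a single edge $e$ issuing from the last vertex $w$: this arc runs all the way to $y$ when $y\in\gamz$, and partway to $y=(e,t)$ when $y$ lies in an edge interior. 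The crucial structural feature is that $\alpha$ visits the vertex $w$ immediately before its terminal arc, so that all rerouting needed to perturb the endpoint can be performed at $w$ using the product structure of edge neighborhoods.

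I then treat the two endpoint types. Suppose first that $y=(e,t)$ is interior to an edge, lying in an edge neighborhood homeomorphic to a product $U\times(c,d)$ (Remark \ref{basisgam}), where $U$ is a neighborhood of $e$ in the hom-space $\gam(w,z)$. Given any neighborhood $\mcu$ of $\alpha$, I first replace it by a standard neighborhood (Lemma \ref{standardnbhdofedgepath}) whose terminal conditions on the last arc are governed by this product edge neighborhood and whose condition at $w$ is a vertex neighborhood. Taking $V_1$ to be the image of $U\times(c,d)$, each $b=(e',t')\in V_1$ (so $e'\in U$ and hence $\partial_0 e'=w$) is reached by the path $\beta$ that agrees with $\alpha$ up to $w$ and then runs along $\{e'\}\times[0,t']$; since $\{e'\}\times(c,d)\subseteq U\times(c,d)$, the parallel arc along $e'$ satisfies the same product conditions as $\alpha$, so $\beta\in\mcu$. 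Thus $\alpha$ is well-targeted.

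Now suppose $y=v$ is a vertex, with the terminal arc running along an edge $e$ into $v$ and with $w$ the penultimate vertex. Given $\mcu$, pass to a standard neighborhood whose final condition confines the terminal arc to a vertex neighborhood $B(v,r)$, and set $V_1=B(v,r_0)$ with $r_0\leq r$. By Remark \ref{basisgam}, $B(v,r_0)$ consists of $v$ together with the near-vertex interior points of the edges incident to $v$, so every $b\in V_1$ is either $v$ itself, reached by $\beta=\alpha$, or a point on an edge $e'$ incident to $v$, reached by following $\alpha$ to $v$ and then traversing the short sub-arc of $e'$ lying inside $B(v,r)$ to $b$; contractibility of $B(v,r)$ onto $v$ keeps this excursion inside the terminal open set, so again $\beta\in\mcu$. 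Hence $\alpha$ is well-targeted in this case as well. Applying the appropriate case to the canonical path between any $x$ and $y$, and noting that the reaching paths $\beta$ are themselves canonical edge paths and hence well-targeted by the same argument, shows every canonical edge path is locally well-targeted, and therefore that $|\gam|$ is locally wep-connected.

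The main obstacle is the compact-open bookkeeping hidden in the phrase ``$\beta\in\mcu$'': one must verify that the rerouted path respects every subbasic condition $\langle C_j,U_j\rangle$ of the standardized neighborhood, not merely the terminal one. As in \cite[Proposition 6.7]{Brazsemi}, this is where the reduction to a standard neighborhood via Lemma \ref{standardnbhdofedgepath} does the real work: it arranges the conditions so that the interval on which $\beta$ departs from $\alpha$ is controlled by a single product edge neighborhood (first case) or a single contractible vertex neighborhood (second case), and the interval-by-interval reparametrization inserting the branch at $w$ or at $v$ then visibly lands in each $U_j$. I expect this verification, rather than the geometric idea, to be the only technical point requiring care.
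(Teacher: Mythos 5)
Your proposal is correct in substance and rests on the same geometric mechanism as the paper's proof: reroute the terminal arc at the last vertex using the product structure of edge neighborhoods, exactly as in \cite[Proposition 6.7]{Brazsemi}, to which the paper defers verbatim. The organization differs, though. The paper first observes that $|\gam|$ is locally path connected at each vertex, which reduces the problem to producing a locally well-targeted path from a vertex to each point $z\in|\gam|\backslash\gamz$; after that reduction, only the single straight arc along one edge $\{e\}\times[0,1)$ requires the rerouting argument. You skip this reduction, build a full canonical edge path between arbitrary pairs of points, and verify the vertex-target case by hand inside $B(v,r)$. Your route is more self-contained (it never invokes local path connectedness at vertices, and it handles arbitrary sources and vertex targets explicitly), at the cost of exactly the case analysis the paper's reduction dispenses with.

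Three corrections before your write-up would stand. First, your parenthetical that well-targeted ``is stronger than, and in particular implies, locally well-targeted'' has the implication backwards: by Definition \ref{localpathconn}, \emph{locally} well-targeted is the stronger notion, since its witnessing paths $\beta$ must themselves be well-targeted. Your proof survives only because your closing paragraph separately checks that the witnesses are canonical edge paths, hence well-targeted; keep that step and delete the false parenthetical. Second, Lemma \ref{standardnbhdofedgepath} does not literally apply to your $\alpha$: it concerns edge paths in $\epgam(x,y)$, whose endpoints are vertices and whose interiors avoid $\gamz$, whereas your $\alpha$ passes through several vertices and, in the first case, ends at a non-vertex. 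What you actually need is the observation preceding that lemma, namely that standard neighborhoods $\bigcap_{j=1}^{n}\left\langle \left[\frac{j-1}{n},\frac{j}{n}\right],U_j\right\rangle$ with $U_j\in\mathscr{B}_{\gam}$ form a basis for the topology of $\mcp|\gam|$. Third, in the interior-target case the terminal arc generally meets \emph{several} conditions of the standard neighborhood --- a vertex neighborhood $B(w,r)$ near its start and edge neighborhoods $A_j\times(c_j,d_j)$ afterward --- so the correct choice of target neighborhood is $V_1=\bigl(\bigcap_j A_j\bigr)\times(c_n,d_n)$, not the image of the last product condition alone; with that choice your ``parallel arc'' device does verify every condition, since $\beta$ copies the fiber coordinate of $\alpha$ on each subinterval and only adjusts it within the final one.
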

\begin{proof}
Since $|\gam|$ is locally path connected at each vertex, it suffices to find a locally well-targeted path from a vertex to each point $z\in |\gam|\backslash \gamz$. Suppose $z$ is the image of $(e,t)$ in $\gam(x,y)\times (0,1)$. Any path $\alpha:\ui\to |\gam|$ such that $\alpha(0)=x$, $\alpha(1)=z$, and having image on the edge $\{e\}\times [0,1)\subset |\gam|$ is locally well-targeted. The argument that $\alpha$ is locally well-targeted is identical to that in \cite[Proposition 6.7]{Brazsemi}.
\end{proof}
Since $\spaces$-graphs are locally wep-connected, we may apply semicovering theory to obtain the last ingredient for our proof of Theorem \ref{nielsonshreier}.
\begin{lemma}\label{embedding}
\cite[Corollary 7.20]{Brazsemi} If $\gam$ is a $\spaces$-graph, $x\in \gamz$ is a vertex, and $H$ is an open subgroup of the topological fundamental group $\pi_{1}^{\tau}(|\gam|,x)$, then there is a semicovering $p:Y\to |\gam|$, $p(y)=x$ such that the induced homomorphism $p_{\ast}:\pi_{1}^{\tau}(Y,y)\to \pi_{1}^{\tau}(|\gam|,x)$ is a topological embedding onto $H$.
\end{lemma}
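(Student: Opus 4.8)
The plan is to realize $H$ by an explicit semicovering built from the based path space of $|\gam|$ and then to verify the semicovering axioms using the machinery of \cite{Brazsemi}; the hypothesis that makes this machinery applicable is that $|\gam|$ is locally wep-connected, which is exactly Lemma \ref{lwepconnected}. Concretely, let $(\mcp|\gam|)_x$ denote the space of paths starting at $x$, and declare $\alpha\sim\beta$ when $\alpha(1)=\beta(1)$ and $[\alpha\cdot\overline{\beta}]\in H$. Put $Y=(\mcp|\gam|)_x/\!\sim$ with the quotient topology, let $y=[c_x]$ be the class of the constant path, and let $p:Y\to|\gam|$ be the endpoint map $p([\alpha])=\alpha(1)$. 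Since $\gam$ is connected, $p$ is a continuous surjection.

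The first block of steps is to check that $p$ is a semicovering. Path lifting is built into the definition: a path $\gamma$ starting at $\alpha(1)$ lifts uniquely from $[\alpha]$ by $t\mapsto[\alpha\cdot\gamma_{[0,t]}]$, and homotopy lifting is produced in the same fashion, so the set-level maps $\mcp p$ and $\Phi p$ are bijections. The real work is to show $p$ is a \emph{local homeomorphism} and that $\mcp p$ and $\Phi p$ are \emph{homeomorphisms}. For a point of $Y$ over a non-vertex $z$, I would use a locally well-targeted path to $z$ --- whose existence is guaranteed by the proof of Lemma \ref{lwepconnected} --- to produce a neighborhood of $[\alpha]$ on which $p$ restricts to a homeomorphism; for a point over a vertex I would instead use the contractible vertex neighborhoods $B(x,r)$ of Remark \ref{basisgam}, which is precisely where the coarser vertex-and-edge-neighborhood topology is essential. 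In both cases it is the openness of $H$ that separates the sheets lying over a single neighborhood inside the quotient $Y$.

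The last step is the identification $p_*(\pi_1^\tau(Y,y))=H$ as topological groups. By the lifting formula the lift of a loop $\beta$ based at $x$ starting from $y=[c_x]$ ends at $[\beta]$, which equals $y$ if and only if $[\beta]\in H$; this is the usual fibre-over-the-basepoint argument and pins down the image of $p_*$ as $H$. That $p_*$ is moreover a \emph{topological} embedding is exactly the general fact recorded in this section that a semicovering induces an open covering morphism $\pit p:\pit Y\to\pit|\gam|$ whose vertex-group maps are open embeddings. I expect the main obstacle to be the local-homeomorphism and lifting-homeomorphism verification of the middle paragraph, and in particular its behaviour at the vertices, where $|\gam|$ fails to be first countable in the quotient topology; once these are in place, the conclusion about $p_*$ follows formally, as in \cite[Corollary 7.20]{Brazsemi}.
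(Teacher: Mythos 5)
Your construction is, for all practical purposes, the one behind the paper's own treatment: the paper does not prove Lemma \ref{embedding} internally but imports it from \cite[Corollary 7.20]{Brazsemi}, whose proof builds exactly your space $Y=(\mcp|\gam|)_x/\!\sim$ (paths identified when they share an endpoint and the concatenation with the reverse lies in $H$), topologized as a quotient of the compact-open topology, with the endpoint projection; the paper's only original contribution at this point is Lemma \ref{lwepconnected}, verifying the local wep-connectedness hypothesis, which you also correctly identify as the enabling condition. Your outline places the two load-bearing ingredients where they belong --- openness of $H$ to separate sheets in the quotient, and, for the local structure, locally well-targeted paths at non-vertex points together with the contractible vertex neighborhoods $B(x,r)$ of Remark \ref{basisgam} at vertices, mirroring the case split in the proof of Lemma \ref{lwepconnected} --- and your fibre-over-the-basepoint identification of $p_{\ast}(\pi_1^{\tau}(Y,y))$ with $H$, plus the appeal to the general fact that a semicovering induces open embeddings on $\pit$ hom-spaces, is exactly right. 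The one caveat is that your middle paragraph announces rather than executes the decisive verifications (that $p$ is a local homeomorphism and that $\mcp p$ and $\Phi p$ are homeomorphisms, in particular continuity of the inverse lifting map in the compact-open topologies, and unique path lifting, which also underlies your set-level bijectivity claim); these constitute essentially the entire content of the cited corollary, so what you have is a faithful roadmap to the citation rather than a self-contained proof --- which, to be fair, is the same level of detail at which the paper itself treats this lemma.
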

\subsection{A semicovering of a $\spaces$-graph is a $\spaces$-graph}
The following result generalizes the fact that a covering (in the classical sense) of a graph is a graph and provides the last ingredient for a proof of Theorem \ref{nielsonshreier}.
\begin{theorem}\label{semicoveringofgraph}
A semicovering of a $\spaces$-graph is a $\spaces$-graph.
\end{theorem}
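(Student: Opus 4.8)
The plan is to construct a $\spaces$-graph $\tgam$ directly out of the semicovering $p\colon Y\to|\gam|$ and then to exhibit a homeomorphism $|\tgam|\cong Y$ lying over $|\gam|$. First I would fix the vertex and edge data. Set the vertex space $\tgam_0=p^{-1}(\gamz)$; since $p$ is a local homeomorphism and $\gamz$ is discrete as a subspace of $|\gam|$ (each $B(x,r)$ meets $\gamz$ only in $x$), the preimage $\tgam_0$ is discrete in $Y$. For the edges, let $M\subseteq|\gam|$ be the midpoint slice, the image of $\gam\times\{1/2\}$, which is homeomorphic to the edge space $\gam$, and put $\tgam=p^{-1}(M)$ with the subspace topology from $Y$. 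Restricting $p$ to the preimage of $M$ yields a local homeomorphism $\bar p\colon\tgam\to M\cong\gam$.

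Next I would define the structure maps by lifting edge-arcs. For $\tilde m\in\tgam$ with $p(\tilde m)$ the midpoint of an edge $e\in\gam(x,y)$, lift the two half-arcs of $\alpha_e$ running from the midpoint to the endpoints $x$ and $y$, starting at $\tilde m$, and declare $\tilde\partial_0(\tilde m),\tilde\partial_1(\tilde m)\in\tgam_0$ to be the respective terminal points. This is exactly where the full semicovering hypothesis (rather than a bare local homeomorphism) is needed: because $\mcp p$ is a homeomorphism, the global lifting operation $(\mcp p)^{-1}$ is continuous, so these endpoint-of-lift maps $\tilde\partial_0,\tilde\partial_1$ are continuous. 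Together with the discreteness of $\tgam_0$ this makes $\tgam$ a genuine $\spaces$-graph (the edge space decomposes over $\tgam_0\times\tgam_0$) and makes $\bar p$ a morphism of $\spaces$-graphs that is a local homeomorphism on both vertex and edge spaces; consequently the realized map $|\bar p|\colon|\tgam|\to|\gam|$ is itself a local homeomorphism.

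I would then build the comparison map $\Theta\colon|\tgam|\to Y$. On vertices $\Theta$ is the inclusion $\tgam_0=p^{-1}(\gamz)\hookrightarrow Y$; on a realized edge point $(\tilde m,t)$ it is $\hat\alpha_e(t)$, where $\hat\alpha_e$ is the unique $p$-lift of $\alpha_e$ with $\hat\alpha_e(1/2)=\tilde m$. Unique path lifting shows $\Theta$ is well defined on the quotient (the two endpoint values agree with $\tilde\partial_i(\tilde m)$) and is a bijection: surjectivity follows by lifting, from an arbitrary interior point with $p(\tilde z)=(e,t)$, the arc back to the midpoint to recover a preimage, and injectivity follows because $\tilde m$ is recovered from $\Theta(\tilde m,t)$ by the same backward lift. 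Continuity of $\Theta$ once more rests on the continuity of $(\mcp p)^{-1}$. Finally, since $p\circ\Theta=|\bar p|$ with both $p$ and $|\bar p|$ local homeomorphisms, $\Theta$ is itself a local homeomorphism, hence an open continuous bijection, hence a homeomorphism; thus $Y\cong|\tgam|$.

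The hardest part will be the behaviour at the vertices. Away from $\gamz$ the restriction of $p$ over each open slab $\gam(x,y)\times(0,1)$ is, by unique lifting along the interval factor, homeomorphic to the product of the midpoint fibre with $(0,1)$, so there $\Theta$ is transparently a homeomorphism. At a lifted vertex $\tilde x$ one must instead match the coarser vertex-neighbourhood topology of $|\gam|$ (Remark \ref{basisgam}) with a sheet neighbourhood of $\tilde x$ in $Y$: one must verify that, for small radius, the edges of $\tgam$ incident to $\tilde x$ are precisely the unique lifts of the edges of $\gam$ incident to $x$ that are visible in that sheet, and that the resulting identification $B(\tilde x,\rho)\cong B(x,r)$ is a homeomorphism. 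This is exactly the point at which the local homeomorphism property of $p$ at $\tilde x$, combined with the continuity of lifting, is used to conclude that $|\bar p|$, and therefore $\Theta$, is a local homeomorphism at $\tilde x$.
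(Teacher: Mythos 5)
Your construction coincides with the paper's: you take $\tgam_0=p^{-1}(\gamz)$, edges of $\tgam$ given by lifts of the canonical edge paths $\alpha_e$, and your comparison map $\Theta$ is exactly the paper's $h(e,t)=\widetilde{(\alpha_e)}_{y_1}(t)$ (parametrized by midpoints rather than by the open subspace $\tgam(y_1,y_2)\subseteq\gam(x_1,x_2)$, which is an immaterial difference once $\bar p$ restricts to an open embedding on each piece). Where you genuinely diverge is the endgame. The paper proves openness of $h$ directly (Lemma \ref{openmap}), using local wep-connectedness of $|\gam|$ and $Y$, the quotient map $ev_1L\colon(\mcp|\gam|)_{x}\to Y$, and openness of the subgroup $p_{\ast}(\pit(Y,y_0))$; you instead factor $p\circ\Theta=|\bar p|$ and run a two-out-of-three argument. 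That final step is sound: if $\Theta$ is a continuous bijection and both $p$ and $|\bar p|$ are local homeomorphisms, then locally $\Theta=(p|_V)^{-1}\circ|\bar p|$ on a small neighborhood, so $\Theta$ is open, hence a homeomorphism. If completed, your route bypasses the heaviest machinery in the paper's proof, which is a real gain.

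The gap is the sentence ``consequently the realized map $|\bar p|\colon|\tgam|\to|\gam|$ is itself a local homeomorphism.'' This does \emph{not} follow from $\bar p$ being a local homeomorphism on vertex and edge spaces: take $\gam$ with one vertex $x$ and a single loop edge $e$, and $\tgam$ with two vertices $a,b$ and a single edge in $\tgam(a,b)$ mapping to $e$. Then $\bar p$ is a local homeomorphism on the (discrete) vertex spaces and on the (one-point) edge spaces, but $|\bar p|$ maps an arc to a circle and fails to be locally injective-onto-open at either vertex: the image of $B(a,s)$ is $\{e\}\times[0,s)\cup\{x\}$, not a neighborhood of $x$. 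What rescues your situation are two facts you must state and prove, not assume: (i) the lifted stars are bijective, $\tgam_{\tilde x}\to\gam_{x}$ and $\tgam^{\tilde x}\to\gam^{x}$, which holds by existence and uniqueness of lifts of $\alpha_e$ and $\overline{\alpha_e}$ (existence is exactly what fails in the counterexample, and it is why $|\bar p|(B(\tilde x,s))$ fills the whole vertex neighborhood $B(x,s)$); and (ii) the pieces $\tgam(y_1,y_2)$ are open in the edge space, i.e.\ the endpoint maps $\tilde\partial_i$ are locally constant. For (ii) your appeal to continuity of ``the global lifting operation $(\mcp p)^{-1}$'' with \emph{varying} initial point goes beyond the fiberwise homeomorphisms $(\mcp Y)_y\cong(\mcp X)_{p(y)}$ recalled in this paper; the stronger continuous-lifting formulation is available in \cite{Brazsemi}, or you can avoid it entirely: for fixed $y_1$ the map $e\mapsto\widetilde{(\alpha_e)}_{y_1}(1/2)$ is a continuous section of $\bar p$, and a continuous section of a local homeomorphism has open image, so $\tilde\partial_0^{-1}(y_1)$ is open (the paper instead extracts this openness from the openness of $p_{\ast}\colon\pit(Y,y)\to\pit(|\gam|,x)$). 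Finally, note that continuity of $\Theta$ at a vertex is not delivered by continuity of lifting alone, because $|\tgam|$ carries the coarse topology of Remark \ref{basisgam}; it needs the sheet-trapping argument of the paper's Lemma \ref{continuous} (unique lifting forces lifts of paths in $B(x,r)$ starting at $\tilde x$ to stay in a sheet over $B(x,r)$), which your last paragraph gestures at but should be carried out explicitly. With (i), (ii), and that vertex argument in place, your proposal becomes a correct and arguably simpler proof; as written, the pivotal claim about $|\bar p|$ is asserted rather than established.
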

\begin{proof}
It suffices to assume the semicovering and $\spaces$-graph in question are connected. Let $p:Y\to |\gam|$ be a connected semicovering of $\spaces$-graph $\gam$. We find a $\spaces$-graph $\tgam$ such that $|\tgam|\cong Y$. Since $\gamz$ is a discrete subspace of $|\gam|$ and $p$ is a local homeomorphism, $p^{-1}(\gamz)$ is a discrete subspace of $Y$. Define the vertex space $\tgam_{0}=p^{-1}(\gamz)$. For $y_1,y_2\in \tgam_{0}$ such that $x_i=p(y_i)$ define \[\tgam(y_1,y_2)=\{e\in \gam(x_1,x_2)|\widetilde{(\alpha_e)}_{y_1}(1)=y_2\}\]with the subspace topology of $\gam(x_1,x_2)$.

Define a map $h:|\tgam|\to Y$ as follows: The restriction of $h$ to $\tgam_{0}$ is the identity. The map $h_{y_1,y_2}:\tgam(y_1,y_2)\times [0,1]\to Y$ given by $h_{y_1,y_2}(e,t)=\widetilde{(\alpha_e)}_{y_1}(t)$ is continuous since $\gam(x_1,x_2)\to \mcp |\gam|(x_1,x_2)$, $e\mapsto \alpha_e$ is continuous,  $\mcp p:(\mcp Y)_{y_1}\to (\mcp X)_{x_1}$ is a homeomorphism, and evaluation $\mcp Y\times [0,1]\to Y$, $(\beta,t)\mapsto \beta(t)$ is continuous. The maps $h_{y_1,y_2}$ induce the function $h$ on the image of $\tgam(y_1,y_2)\times [0,1]$ in $|\tgam|$. It follows from Lemma \ref{bijection} that $h$ is a bijection, Lemma \ref{continuous} that $h$ is continuous, and Lemma \ref{openmap} that $h$ is an open map. Therefore $h$ is a homeomorphism.
\end{proof}
To prove the following Lemmas, we make some observations about the edge spaces $\tgam(y_1,y_2)$. Let $A=\{\alpha_e\in\mcp |\gam|(x_1,x_2)|e\in \gam(x_1,x_2)\}$ and recall $B=\{\beta\in \mcp |\gam|(x_1,x_2)|\tilde{\beta}_{y_1}(1)=y_2\}$ is open in $ \mcp |\gam|(x_1,x_2)$. It is now clear that $A\cap B$ is open in $A$ and is the image of $\tgam(y_1,y_2)$ under the homeomorphism $\gam(x_1,x_2)\cong A$, $e\mapsto \alpha_e$. Therefore, $\tgam(y_1,y_2)$ is an open subspace of $\gam(x_1,x_2)$. It follows that whenever $p(y_1)=x_1$, 
\begin{itemize}
\item $\tgam_{y_1}= \gam_{x_1}$ and
\item $\gam(x_1,x_2)$ decomposes as the topological sum $\coprod_{p(y_2)=x_2}\tgam(y_1,y_2)$.
\end{itemize}
\begin{lemma}\label{bijection}
$h:|\tgam|\to Y$ is a bijection.
\end{lemma}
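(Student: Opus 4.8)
The plan is to verify surjectivity and injectivity separately, in each case reducing a claim about points of $|\tgam|$ to the unique path-lifting property supplied by the semicovering $p$ (i.e. the homeomorphism $\mcp p:(\mcp Y)_y\to(\mcp X)_{p(y)}$). Throughout I use that $h$ is the inclusion $\tgam_0=p^{-1}(\gamz)\hookrightarrow Y$ on vertices and $h(e,t)=\widetilde{(\alpha_e)}_{y_1}(t)$ on the interior of an edge $e\in\tgam(y_1,y_2)$, and that every point of $|\tgam|$ is either a vertex or lies in the interior $\coprod_{(y_1,y_2)}\tgam(y_1,y_2)\times(0,1)$.

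For surjectivity, take $z\in Y$. If $p(z)\in\gamz$ then $z\in\tgam_0$ and $z=h(z)$, so I may assume $p(z)$ is the image of some $(e,t)\in\gam(x_1,x_2)\times(0,1)$. I would lift the reverse restricted path $\overline{(\alpha_e)_{[0,t]}}$, which runs from $\alpha_e(t)=p(z)$ back to $x_1$, starting at $z$; its terminal point is a vertex $y_1\in p^{-1}(x_1)\subseteq\tgam_0$. Reversing this lift and invoking uniqueness of lifts identifies it with the restriction of $\widetilde{(\alpha_e)}_{y_1}$, so $\widetilde{(\alpha_e)}_{y_1}(t)=z$. Setting $y_2=\widetilde{(\alpha_e)}_{y_1}(1)$ gives $e\in\tgam(y_1,y_2)$ and $h(e,t)=z$.

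For injectivity, suppose $h(P)=h(Q)=z$. Applying $p$ and using that $\alpha_e$ is an edge path (so $\alpha_e(t)\notin\gamz$ for $t\in(0,1)$) rules out the mixed case in which one of $P,Q$ is a vertex and the other interior, and settles the all-vertex case since $h$ is the inclusion there. In the remaining case write $P=[(e,s)]$ and $Q=[(e',s')]$ with $e\in\tgam(y_1,y_2)$, $e'\in\tgam(y_1',y_2')$ and $s,s'\in(0,1)$. Then $\alpha_e(s)=p(z)=\alpha_{e'}(s')$ in $|\gam|$, and since the interior points of $|\gam|$ decompose as $\coprod_{(x,y)}\gam(x,y)\times(0,1)$, this forces $e=e'$ in $\gam$ and $s=s'$. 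Finally, the lifts $\widetilde{(\alpha_e)}_{y_1}$ and $\widetilde{(\alpha_e)}_{y_1'}$ both pass through $z$ at time $s$, so lifting $\overline{(\alpha_e)_{[0,s]}}$ from $z$ uniquely yields $y_1=y_1'$, whence $y_2=y_2'$ by uniqueness of the forward lift. Thus $(e,s)$ and $(e',s')$ represent the same point and $P=Q$.

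The substance of the argument, and the only step needing care, is the uniqueness-of-lifts bookkeeping: confirming that a point of $Y$ lying over an interior edge point determines a single lifted edge of $\tgam$ through it together with its initial vertex. This is precisely where the semicovering hypothesis, rather than the mere local-homeomorphism property of $p$, enters, via unique lifting of $\alpha_e$ and of its reversal.
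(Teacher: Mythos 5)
Your proof is correct, and your surjectivity argument takes a genuinely different route from the paper's. The paper fixes a basepoint $y_0\in p^{-1}(x_0)$, uses connectedness of $Y$ (arranged at the start of Theorem \ref{semicoveringofgraph}) to choose a path $\beta$ from $y_0$ to the given point, homotopes $p\beta$ rel endpoints to a concatenation $\alpha_{e_1}^{\delta_1}\cdot\ldots\cdot\alpha_{e_n}^{\delta_n}\cdot(\alpha_e)_{[0,t]}$ of edge paths, and reads off the vertex $y_1$ and the identity $\widetilde{(\alpha_e)}_{y_1}(t)=y$ from the lift of this concatenation; this invokes the homotopy-lifting half of the semicovering definition ($\Phi p$ a homeomorphism) as well as path lifting. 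You instead lift the reversed partial edge path $\overline{(\alpha_e)_{[0,t]}}$ starting at the target point $z$ itself; since the semicovering gives unique path lifting at \emph{every} point of $Y$, not just at vertices, the endpoint of this backward lift is the desired initial vertex $y_1\in p^{-1}(x_1)$, and reversing and invoking uniqueness recovers $z=\widetilde{(\alpha_e)}_{y_1}(t)$. Your argument is more local and slightly more general: it needs only the path-lifting homeomorphism $\mcp p$ and does not use connectedness of $Y$ or any homotopy-class bookkeeping. On injectivity you in fact do a bit more than the printed proof: after deducing $e=e'$ and $s=s'$ from $\alpha_e(s)=\alpha_{e'}(s')$, the paper concludes $z=z'$ directly, passing silently over the fact that the same edge $e\in\gam(x_1,x_2)$ appears in $\tgam(y_1,\cdot)$ for every $y_1\in p^{-1}(x_1)$, so that one still needs $y_1=y_1'$ to identify the two points of $|\tgam|$; your step of lifting $\overline{(\alpha_e)_{[0,s]}}$ from the common point $z$ and using uniqueness supplies exactly this, after which $y_2=y_2'$ follows as the endpoint of the common forward lift. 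So your version is both correct and, on this last point, more complete than the paper's.
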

\begin{proof}
First, we show $h$ is surjective. It suffices to consider a point $y\in Y\backslash \tgam_0$. Note $p(y)$ is the image of a pair $(e,t)\in \gam(x_1,x_2)\times (0,1)$ in $|\gam|$. Fix $x_0\in \gamz$ and $y_0\in p^{-1}(x_0)$ and let $\beta$ be a path from $y_0$ to $y$ in $Y$. Since $|\gam|$ is a connected $\spaces$-graph, there is a sequence of vertices $x_0,x_1,...,x_{n-1},x_n$, edges $e_i\in\gam(x_{i-1},x_i)$, and $\delta_i\in \{\pm 1\}$ such that $p\beta$ is homotopic (rel. endpoints) to the concatenation \[\alpha=\alpha_{e_1}^{\delta_1}\cdot ...\cdot \alpha_{e_{n}}^{\delta_n}\cdot (\alpha_{e})_{[0,t]}.\]In particular, $\tilde{\alpha}_{y_0}(1)=y$. Let $y_1$ be the endpoint of the lift of $\alpha_{e_1}^{\epsilon_1}\cdot ... \cdot \alpha_{e_{n}}^{\delta_n}$ starting at $y_0$ and $y_2=\tilde{\alpha_e}(1)$. By our choice of $\alpha$, $p(y_i)=x_i$. Then the lift of $(\alpha_{e})_{[0,t]}$ starting at $y_1$ ends at $h(e,t)=\widetilde{(\alpha_e)}_{y_1}(t)=y$.

For injectivity, suppose $z,z'\in |\tgam|$. If one of $z$ or $z'$ is a vertex and $h(z)=h(z')$, then $z,z'\in \tgam_0$ and it follows that $z=z'$. Therefore it suffices to check that $h$ is injective on $|\tgam|\backslash \tgam_0$. Suppose $z$ is the image of $(e,t)\in \tgam(y_1,y_2)\times (0,1)$ and $z'$ is the image of $(f,u)\in \tgam(y_3,y_4)\times (0,1)$ under $h$. If $h(z)=\widetilde{(\alpha_e)}_{y_1}(t)=\widetilde{(\alpha_f)}_{y_3}(u)=h(z')$, then $\alpha_e(t)=\alpha_f(u)$ in $|\gam|$, however, this only occurs if $e=f$ and $t=u$ and thus $z=z'$.
\end{proof}
\begin{lemma}\label{continuous}
$h:|\tgam|\to Y$ is continuous.
\end{lemma}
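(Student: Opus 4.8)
The plan is to verify continuity of $h:|\tgam|\to Y$ by working locally, using the basis for $|\tgam|$ consisting of vertex and edge neighborhoods (Remark \ref{basisgam}). Since continuity is a local property and $|\tgam|$ is covered by these basic open sets, it suffices to show that the restriction of $h$ to each vertex neighborhood $B(y_1,r)$ and to each edge neighborhood is continuous. The edge-neighborhood case should be essentially immediate: on a set of the form $U\times(a,b)\subseteq \tgam(y_1,y_2)\times(0,1)$, the map $h$ agrees with $h_{y_1,y_2}(e,t)=\widetilde{(\alpha_e)}_{y_1}(t)$, which the theorem's proof already observed is continuous (it factors through the homeomorphism $\mcp p$ and the continuous evaluation map $\mcp Y\times[0,1]\to Y$). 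So the substantive work is at the vertices.

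**The vertex case.** First I would fix a vertex $y_1\in\tgam_0$ with $p(y_1)=x_1$ and consider a vertex neighborhood $B(y_1,r)$, whose image under $h$ I must show lands continuously inside $Y$. Recall from the observations following the theorem that $\tgam_{y_1}=\gam_{x_1}$, i.e. the outgoing edges at $y_1$ are exactly the outgoing edges at $x_1$ (and dually for incoming edges), so the combinatorial data of $B(y_1,r)$ matches that of $B(x_1,r)$ upstairs in $|\gam|$. The key point is that $p\circ h$ restricted to $B(y_1,r)$ is just the realization map downstairs followed by the canonical inclusion, which is continuous, and $p$ is a \emph{local homeomorphism}. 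So I would choose an open neighborhood $W$ of $y_1$ in $Y$ on which $p$ restricts to a homeomorphism onto an open set of $|\gam|$; then for sufficiently small radius $r$, the image $h(B(y_1,r))$ lies inside $W$, and on $B(y_1,r)$ we can write $h=(p|_W)^{-1}\circ(p\circ h)$, a composite of continuous maps. The reason $h(B(y_1,r))\subseteq W$ for small $r$ is exactly the unique-lifting/continuity of lifts packaged in the semicovering hypothesis: short edge paths starting at $y_1$ stay near $y_1$.

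**The main obstacle.** The delicate part is showing that $h(B(y_1,r))$ is eventually contained in a single sheet $W$ over which $p$ is a homeomorphism; this requires genuinely using that $p$ is a semicovering and not merely a local homeomorphism, because $B(y_1,r)$ bundles together \emph{all} edges emanating from $y_1$ simultaneously, and one must control all their short initial lifts uniformly. Concretely, I would use that $\mcp p:(\mcp Y)_{y_1}\to(\mcp X)_{x_1}$ is a homeomorphism, so that the family $\{\widetilde{(\alpha_e)}_{y_1}\}_{e}$ of lifted edge paths depends continuously on $e$, and then invoke the continuity of evaluation at small times together with the fact that $p$ is a local homeomorphism at $y_1$ to confine the lifts to a chosen sheet. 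An alternative that sidesteps the sheet argument is to verify continuity of $h$ at each point of $B(y_1,r)$ by directly checking preimages of basic open sets of $Y$ against the standard-neighborhood basis of $\mcp|\gam|$ (Lemma \ref{standardnbhdofedgepath}); this is more computational but only uses the tools already assembled, namely that $e\mapsto\alpha_e$ is continuous, that $\mcp p$ is a homeomorphism, and that evaluation is continuous. Either way, the crux is the uniform control of lifted initial segments near the vertex $y_1$, which is precisely where the semicovering property (rather than a bare local homeomorphism) does the work.
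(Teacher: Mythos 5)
Your overall plan coincides with the paper's proof: continuity is automatic away from the vertices because the maps $h_{y_1,y_2}$ are continuous (the coarse topology on $|\tgam|$ differs from the quotient topology only at vertices), so the whole content is continuity at a vertex $y_0\in\tgam_0$, handled by choosing $V\ni y_0$ with $p|_V$ a homeomorphism onto a vertex neighborhood $B(x_0,r)$ and showing $h(\widetilde{B}(y_0,r))\subseteq V$. Your factorization $h=(p|_V)^{-1}\circ (p\circ h)$ on the vertex neighborhood is a harmless repackaging of this.

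However, your concrete plan for the crux does not close. You propose to confine the lifts to a single sheet by combining continuity of $e\mapsto \widetilde{(\alpha_e)}_{y_0}$ (via the homeomorphism $\mcp p$) with continuity of evaluation at small times. Continuity in $e$ only shows that the set of edges whose lifts have initial segment inside a chosen sheet is \emph{open} in $\gam_{x_0}$; since the edge space carries an arbitrary topology and need not be compact, this gives neither a single radius $r$ valid for all edges nor the conclusion that the ``good'' set of edges is all of $\gam_{x_0}$. You diagnose the problem as one of \emph{uniform} control, but in fact no uniformity argument is needed, and the mechanism is not continuity of lifts at all: it is uniqueness of lifts, which you mention in passing but then abandon. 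Once $p|_V:V\to B(x_0,r)$ is a homeomorphism, for \emph{any} edge $e\in\gam_{x_0}$ and $t<r$ the path $(p|_V)^{-1}\circ (\alpha_e)_{[0,t]}$ is a lift of $(\alpha_e)_{[0,t]}$ starting at $y_0$, so by injectivity of $\mcp p$ it \emph{is} the lift $\widetilde{((\alpha_e)_{[0,t]})}_{y_0}$; hence every short lift lies in $V$, with the same $r$ for every edge (and likewise for reversed edges $f\in\gam^{x_0}$). This is exactly the paper's one-line argument that ``the lifts of all paths in $B(x_0,r)$ starting at $y_0$ have image in $V$,'' and with it your factorization becomes a proof. (The paper additionally invokes Lemma \ref{bijection} to upgrade the containment to the equality $h(\widetilde{B}(y_0,r))=V$; your composite does not need the equality here, though it is what gets reused in Lemma \ref{openmap}.)
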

\begin{proof}
Since the maps $h_{y_1,y_2}$ are continuous, $h$ is continuous if $|\gam|$ is given the quotient topology. However, we using the coarser topology on $|\gam|$ (which may differ from the quotient topology at vertices). Thus it remains to check that $h$ is continuous at each vertex of $|\tgam|$. Suppose $y_0\in \tgam_0$ and $V$ is an open neighborhood of $h(y_0)=y_0$ in $Y$ such that $p$ maps $V$ homeomorphically onto a vertex neighborhood $B(x_0,r)$ of $p(y_0)=x_0$. Since $h$ is a bijection (Lemma \ref{bijection}), \[h(\widetilde{B}(y_0,r))=\{\widetilde{(\alpha_e)}_{y_0}(t)\in Y|0\leq t<r, e\in \gam_{y_0}\}\cup \{\widetilde{(\overline{\alpha_f})}_{y_0}(t)\in Y|0\leq t<r, f\in \gam^{y_0}\}.\]But $p$ maps $V$ homeomorphically onto the path connected set $B(x_0,r)$. Therefore, the lifts of all paths in $B(x_0,r)$ starting at $y_0$ have image in $V$. It follows that $h(\widetilde{B}(y_0,r))= V$.
\end{proof}
\begin{lemma}\label{openmap}
$h:|\tgam|\to Y$ is an open map.
\end{lemma}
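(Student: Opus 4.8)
We need to show $h:|\tgam|\to Y$ is an open map. Since $h$ is already known to be a continuous bijection (Lemmas \ref{bijection}, \ref{continuous}), openness will complete the proof that $h$ is a homeomorphism. The domain $|\tgam|$ carries the coarser topology generated by vertex and edge neighborhoods (Remark \ref{basisgam}), so it suffices to verify that $h$ sends each basic open set — each vertex neighborhood $\widetilde{B}(y_0,r)$ and each edge neighborhood — to an open set in $Y$.

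**The plan.** Let me handle the two types of basic neighborhoods separately. For a vertex neighborhood $\widetilde{B}(y_0,r)$ with $y_0\in\tgam_0$, I would use exactly the computation already appearing in the proof of Lemma \ref{continuous}: choosing an open $V\subseteq Y$ that $p$ maps homeomorphically onto the vertex neighborhood $B(x_0,r)$ of $x_0=p(y_0)$, one shows $h(\widetilde{B}(y_0,r))=V$, which is open. So the vertex case is essentially free. The real content is the edge case. Let me start from a basic edge neighborhood in $|\tgam|$: the image of $U\times(a,b)$ where $U$ is open in some $\tgam(y_1,y_2)$ and $0<a<b<1$. Because $\tgam(y_1,y_2)$ is an open subspace of $\gam(x_1,x_2)$ (established before Lemma \ref{bijection}), $U$ is also open in $\gam(x_1,x_2)$. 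Under $h$, this set is $\{\widetilde{(\alpha_e)}_{y_1}(t): e\in U,\ a<t<b\}$, and I must show this is open in $Y$.

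**The main step.** The key is to exploit that $p$ is a local homeomorphism together with continuity of lifting. Fix a point $\widetilde{(\alpha_e)}_{y_1}(s)$ in the set, with $e\in U$, $s\in(a,b)$. Its image under $p$ is $\alpha_e(s)$, an interior edge point, so there is an edge neighborhood $W\times(a',b')\subseteq |\gam|$ about $\alpha_e(s)$ with $W\subseteq U$ (shrinking $W$ inside the open set $U$) and $[a',b']\subseteq(a,b)$, which $p$ maps homeomorphically from some open $V'\ni \widetilde{(\alpha_e)}_{y_1}(s)$ in $Y$. The crucial point to nail down is that for each edge $f\in W$ and each $t\in(a',b')$, the lift $\widetilde{(\alpha_f)}_{y_1}(t)$ actually lands in $V'$ and agrees with $h(f,t)$ — i.e. that all nearby edges $f$ lift to the same sheet through $y_1$. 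I would argue this by continuity of the lifting map: the map $W\times(a',b')\to Y$ sending $(f,t)\mapsto \widetilde{(\alpha_f)}_{y_1}(t)$ is continuous (same reasoning as in Theorem \ref{semicoveringofgraph}, via the homeomorphism $\mcp p$ and continuity of evaluation), so the preimage of $V'$ is open and contains $(e,s)$; hence after further shrinking $W$ and $(a',b')$ the entire image lies in $V'$. Since $p|_{V'}$ is a homeomorphism onto an open set containing $\{\alpha_f(t): f\in W, t\in(a',b')\}$, and since on $V'$ lifting inverts $p$, the set $\{\widetilde{(\alpha_f)}_{y_1}(t): f\in W, t\in(a',b')\}$ equals $V'\cap p^{-1}(\text{edge nbhd})$, an open subset of $Y$ containing our chosen point and contained in $h(U\times(a,b))$.

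**Expected obstacle.** The delicate point is the bookkeeping that nearby edges $f\in W$ lift \emph{through the same starting vertex} $y_1$ to the correct sheet: a priori $\widetilde{(\alpha_f)}_{y_1}$ is the lift starting at $y_1$, but I must be sure its value at $t$ agrees with $h(f,t)$, which requires $f\in\tgam(y_1,\cdot)$, i.e. that $W$ stays inside a single edge-block $\tgam(y_1,y_2)$ of the decomposition $\coprod_{p(y_2)=x_2}\tgam(y_1,y_2)$. This is guaranteed precisely because $U\subseteq\tgam(y_1,y_2)$ is open in that block and $W\subseteq U$. The remaining care is purely topological matching of the coarse topology on $|\tgam|$ with the subspace structure of $Y$, which I expect to go through by the local-homeomorphism property with no surprises.
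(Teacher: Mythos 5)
Your edge-neighborhood argument is correct, and it is genuinely different from the paper's: the paper tests openness of $h(U\times(a,b))$ and $h(\widetilde{B}(y_0,r))$ through the quotient map $ev_1 L:(\mcp|\gam|)_{x}\to Y$ (whose quotient property requires $Y$ to be locally wep-connected) together with openness of the subgroup $p_{\ast}(\pit(Y,y_0))$ in $\pit(|\gam|,x_0)$, building standard neighborhoods of loops $\beta\cdot\overline{\gamma}$ inside $\pi^{-1}(p_{\ast}(\pit(Y,y_0)))$. Your local route --- continuity of $\ell(f,t)=\widetilde{(\alpha_f)}_{y_1}(t)$ (this is just the already-established continuity of $h_{y_1,y_2}$), a local section $V'$ of $p$ over a small edge neighborhood, shrinking $W'\times I'$ so $\ell(W'\times I')\subseteq V'$, and then the forced identity $\ell(W'\times I')=(p|_{V'})^{-1}(W'\times I')$ coming from uniqueness of lifts and injectivity of $p|_{V'}$ --- is airtight, and it avoids both the quotient machinery and the open-subgroup property entirely; your ``same sheet'' worry is indeed settled by $W'\subseteq U\subseteq\tgam(y_1,y_2)$.

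The genuine gap is the vertex case, which you declare ``essentially free.'' You claim that for the \emph{given} radius $r$ there is an open $V$ with $p|_V$ a homeomorphism onto $B(x_0,r)$, citing the computation in Lemma \ref{continuous}. But there the quantifiers run the other way: $V$ is given first and $r$ is then chosen small enough that $B(x_0,r)\subseteq p(V)$. For openness you must handle \emph{every} $r\in(0,1)$, and for large $r$ no such $V$ need exist. Concretely, if $e\in\gam(x_0,x_0)$ is a loop whose lift at $y_0$ is not a loop (exactly the situation when the subgroup is proper, e.g.\ $|\gam|=S^1$ with $Y=\mathbb{R}$) and $r>1/2$, then $\alpha_e$ has image in $B(x_0,r)$, so a section $V\ni y_0$ over $B(x_0,r)$ would force $\widetilde{(\alpha_e)}_{y_0}$ to be a loop --- a contradiction; moreover $p$ restricted to $h(\widetilde{B}(y_0,r))$ is two-to-one over the points $(e,t)$ with $1-r<t<r$, so $h(\widetilde{B}(y_0,r))$ cannot equal any such $V$. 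Fortunately the gap is repairable with your own tools: cover $h(\widetilde{B}(y_0,r))$ by (i) an evenly covered star $h(\widetilde{B}(y_0,r_0))$ for some small $r_0\leq r$ supplied by the local homeomorphism property, which handles the point $y_0$, and (ii) for every other point --- lying on an outgoing edge in $\tgam_{y_0}=\gam_{x_0}$ at parameter in $(0,r)$ or an incoming edge at parameter in $(1-r,1)$ --- an open set produced by your edge argument, noting that each block $\tgam(y',y_0)$ is open in $\gam(x_1,x_0)$ and that reverse lifting gives the fixed-target decomposition $\gam(x_1,x_0)=\coprod_{p(y')=x_1}\tgam(y',y_0)$ analogous to the paper's fixed-source one. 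With that repair your proof goes through and is simpler than the paper's; as written, however, the vertex case rests on a false claim.
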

\begin{proof}
Since $|\gam|$ is locally wep-connected (Lemma \ref{lwepconnected}), $Y$ is locally wep connected \cite[Corollary 6.12]{Brazsemi}. Thus for each $y\in Y$, evaluation $ev_1:(\mcp Y)_{y}\to Y$, $ev_1(\beta)=\beta(1)$ is quotient \cite[Proposition 6.2]{Brazfretopgrp}. Additionally, if $p(y)=x$, $\mcp p:(\mcp Y)_{y}\to (\mcp |\gam|)_{x}$ has a continuous inverse $L:(\mcp |\gam|)_{x}\to (\mcp Y)_{y}$. To show $h$ is open, we use the fact that the composition $ev_1 L:(\mcp |\gam|)_{x}\to Y$, $\beta\mapsto \tilde{\beta}_{y}(1)$ is quotient whenever $p(y)=x$.

Fix a vertex $y_0\in \tgam_0$ and $p(y_0)=x_0$. Suppose $\tilde{B}(y_0,r)$, $0<r<1$ is a vertex neighborhood of $y_0\in \tgam_0$. As in the previous lemma, if \[V=\{\widetilde{(\alpha_e)}_{y_0}(t)\in Y|0\leq t<r, e\in \gam_{y_0}\}\cup \{\widetilde{(\overline{\alpha_f})}_{y_0}(t)\in Y|0\leq t<r, f\in \gam^{y_0}\},\] then $h(\tilde{B}(y_0,r))=V$. Again, we use the fact that $p(V)=B(x_0,r)$ and if $\gamma:\ui\to B(x_0,r)$ is the canonical arc from $x_0$ to a given point $z\in B(x_0,r)$, then $\tilde{\gamma}_{y_0}$ has image in $V$. We claim that $V$ is open in $Y$.

Since $ev_1 L$ is quotient, it suffices to show $L^{-1}(ev_{1}^{-1}(V))$ is open in $(\mcp|\gam|)_{x_0}$. If $\beta\in L^{-1}(ev_{1}^{-1}(V))$, then $\tilde{\beta}_{y_0}(1)\in V$ and thus $\beta(1)\in B(x_0,r)$. Let $\gamma$ be the canonical arc from $x_0$ to $\beta(1)$ in $B(x_0,r)$ and recall $Im(\tilde{\gamma}_{y_0})\subset V$. Thus $\widetilde{\beta\cdot\overline{\gamma}}_{y_0}$ is a loop based at $y_0$ and $[\beta\cdot\overline{\gamma}]$ lies in the open subgroup $p_{\ast}(\pit(Y,y_0))$ of $\pit(|\gam|,x_0)$. Since $\pi:\Omega(X,x_0)\to \pit(X,x_0)$ is continuous, there is a basic open neighborhood $\mcu=\bigcap_{j=1}^{n}\left\langle \left[\frac{j-1}{n},\frac{j}{n}\right],U_j\right\rangle$ of $\beta\cdot \overline{\gamma}$ in $\mcp|\gam|$ such that
$\mcu\cap \Omega(X,x_0)\subseteq \pi^{-1}(p_{\ast}(\pit(Y,y_0)))$. Since $B(x_0,r)$ is contractible, we may assume 1. $n$ is even, 2. $U_1=B(x_0,r)$, and 3. $U_k=B(x_0,r)$ for $k\geq n/2$. Now $\mcv=\mcu_{[0,1/2]} \cap (\mcp|\gam|)_{x_0}$ is an open neighborhood of $\beta$ in $(\mcp|\gam|)_{x_0}$ which we claim is a subset of $L^{-1}(ev_{1}^{-1}(V))$. Suppose $\beta '\in \mcv$. Then $\beta '(1)\in U_{n/2}=B(x_0,r)$ and, if $\gamma '$ is the canonical arc from $x_0$ to $\beta '(1)$, then $\beta\cdot\overline{\gamma '}\in \mcu\cap \Omega(X,x_0)\subseteq \pi^{-1}(p_{\ast}(\pit(Y,y_0)))$. Thus $\widetilde{(\beta\cdot\overline{\gamma '})}_{y_0}(1)=y_0$. Since $\widetilde{\gamma '}_{y_0}$ has image in $V$, \[ev_1L(\beta ')=\tilde{\beta '}_{y_0}(1)=\tilde{\gamma '}_{y_0}(1)\in V.\] It follows that $L^{-1}(ev_{1}^{-1}(V))$ is open in $(\mcp|\gam|)_{x_0}$.

Let $y_1,y_2\in \tgam_0$, $p(y_i)=x_i$, and suppose $U\times (a,b)\subset \tgam(y_1,y_2)\times (0,1)$ is an edge neighborhood in $|\tgam|$. Note that \[W=h(U\times (a,b))=\{\widetilde{(\alpha_e)}_{y_1}(t)\in Y|(e,t)\in \tgam(y_1,y_2)\times (0,1)\}.\] We show $L^{-1}(ev_{1}^{-1}(W))$ is open in $(\mcp X)_{x_1}$.

Recall $\tgam(y_1,y_2)$ is defined to be an open subspace of $\gam(x_1,x_2)$. Therefore $ph$ maps $U\times (a,b)$ homeomorphically onto the corresponding edge neighborhood $U\times (a,b)\subset |\gam|$. If $\beta\in L^{-1}(ev_{1}^{-1}(W))$, then $\widetilde{\beta}_{y_1}(1)=\widetilde{(\alpha_e)}_{y_1}(t)\in W$ for some $(e,t)\in U\times (a,b)$. Let $\gamma=(\alpha_e)_{[0,t]}$. Since $\widetilde{\beta\cdot \overline{\gamma}}_{y_1}$ is a loop based at $y_1$, we have, as in the vertex neighborhood case, that $\beta\cdot \overline{\gamma}$ lies in the open neighborhood $\pi^{-1}(p_{\ast}(\pi_1(|\gam|,x_1)))\subset \Omega(|\gam|,x_1)$. Take a basic open neighborhood $\mcu=\bigcap_{j=1}^{n}\left\langle \left[\frac{j-1}{n},\frac{j}{n}\right],U_j\right\rangle$ of $\beta\cdot \overline{\gamma}$ in $\mcp|\gam|$ such that
$\mcu\cap \Omega(|\gam|,x_1)\subseteq \pi^{-1}(p_{\ast}(\pit(|\gam|,x_1)))$. In particular, we may assume 1. $n$ is even, 2. $U_1=U_n=B(x_1,r)$ is a vertex neighborhood, 3. when $n/2\leq k<n$, $U_k$ is an edge neighborhood of the form $A\times (r_k,s_k)$ for an open set $A\subseteq U$, and 4. $(r_{n/2},s_{n/2})=(r_{(n/2)+1},s_{(n/2)+1})\subseteq (a,b)$. Now $\mcv=\mcu_{[0,1/2]}\cap (\mcp |\gam|)_{x_1}$ is an open neighborhood of $\beta$ in $(\mcp |\gam|)_{x_1}$. Note that $\beta(1)\in A\times(r_{n/2},s_{n/2})$.

We check that $\mcv\subseteq L^{-1}(ev_{1}^{-1}(W))$. If $\beta '\in \mcv$, then $\beta '(1)\in A\times (r_{n/2},s_{n/2})\subseteq U\times (a,b)\subset |\gam|$. If $\beta '(1)$ is the image of $(f,u)\in A\times (r_{n/2},s_{n/2})\subseteq \gam(x_1,x_2)\times (0,1)$ in $|\gam|$, then $\alpha_{f}(u)=\beta '(1)$. Choose any $\epsilon$ such that $\frac{n-2}{n}<\epsilon <1$. Define a path $\gamma '$ so that
\begin{itemize}
\item $\gamma '(v)=(\alpha_f)_{[0,t]}(v)$ for $v\in [0,\epsilon]$.
\item $(\gamma ')_{[\epsilon ,1]}$ is the canonical arc from $z=(\alpha_f)_{[0,t]}(\epsilon)$ to $\alpha_f(u)$ in $A\times (r_{n/2},s_{n/2})$.
\end{itemize}
Notice that $\gamma '$ is constructed so that 1. $\beta '(1)=\gamma '(1)$, 2. $\beta '\cdot \overline{\gamma '}$ is a loop in $\mcu\subseteq \pi^{-1}(p_{\ast}(\pi_1(|\gam|,x_1)))$ based at $x_1$, and 3. $\gamma '$ is homotopic (rel. endpoints) to $(\alpha_f)_{[0,u]}$ (consequently $\widetilde{\gamma '}_{y_1}(1)=\widetilde{(\alpha_f)}_{y_1}(u)$). Since $[\beta ' \cdot \overline{\gamma '}]\in p_{\ast}(\pi_1(|\gam|,x_1))$, we have  \[ev_1L(\beta)=\widetilde{(\beta ')}_{y_1}(1)=\widetilde{\gamma '}_{y_1}(1)=\widetilde{(\alpha_f)}_{y_1}(u)\in W. \] Thus $\mcv\subseteq L^{-1}(ev_{1}^{-1}(W))$.
\end{proof}
\subsection{A proof of Theorem \ref{nielsonshreier}}
We conclude with a proof of Theorem \ref{nielsonshreier}.
\begin{proof} Suppose $X$ is a space with basepoint $\ast\in X$ and $H$ is an open subgroup of the free topological group $F_G(X,\ast)$. Let $h(X)$ be a space such that $\pi_0(h(X))=X$ (see Remark \ref{everygraphisapcgraph}) and $\gam$ be the $\spaces$ graph with $\gamz=\{a,b\}$ (i.e. two vertices), $\gam(a,b)=h(X)$, and $\gam(b,a)=\emptyset$. Note that the edge space of $\pi_0(\gam)$ is precisely $X$. By Theorem \ref{computation}, $\pit(\gam,\gamz)$ is isomorphic to the free $\spaces$-groupoid $\mathscr{F}(\pi_0(\gam))$. A tree $T\subseteq \pi_0(\gam)$ is given by taking $T_0=\{a,b\}$ with edge space $T=\{\ast\}$. Note $\pi_0(\gam)/T\cong X$ as based spaces. Theorem \ref{vertexgroup} gives the middle isomorphism in
\[\pi_{1}^{\tau}(|\gam|,a)=\pit(\gam,\gamz)(a)\cong F_G(\pi_0(\gam)/T,\ast)\cong F_G(X,\ast).\]
By Lemma \ref{embedding}, there is a semicovering $p:Y\to |\gam|$, $p(y)=a$ such that the induced homomorphism $\pi_{1}^{\tau}(Y,y)\to \pi_{1}^{\tau}(|\gam|,a)\cong F_G(X,\ast)$ is a topological embedding onto $H$. According to Theorem \ref{semicoveringofgraph}, the semicovering space $Y$ is a $\spaces$-graph. Finally, Corollary \ref{fundgroupoftopgraph} applies to $Y$ to give that $\pi_{1}^{\tau}(Y,y)\cong H$ is a free Graev topological group.
\end{proof}
\end{document}